\numberwithin{equation}{section}
\renewcommand{\O}{\mathcal{O}}
\newtheorem{theorem}{Theorem}[section]
\begin{document}

\title{Nearest neighbor recurrence relations for multiple orthogonal polynomials}
\author{Walter Van Assche \\
Katholieke Universiteit Leuven, Belgium}
\date{\today}
\maketitle
\begin{abstract}
We show that multiple orthogonal polynomials for $r$ measures $(\mu_1,\ldots,\mu_r)$ satisfy a system of linear recurrence relations only involving nearest neighbor multi-indices $\vec{n}\pm \vec{e}_j$, where $\vec{e}_j$ are the standard unit vectors. The recurrence coefficients are not arbitrary but satisfy a system of partial difference equations with boundary values given by the recurrence coefficients of the orthogonal polynomials with each of measures $\mu_j$. We show how the Christoffel-Darboux formula for multiple orthogonal polynomials can be obtained easily using this information.
We give explicit examples involving multiple Hermite, Charlier, Laguerre, and Jacobi polynomials. 
\end{abstract}

\section{Introduction}  
The three-term recurrence relation is an important piece of information when one is studying orthogonal polynomials.
If $\mu$ is a positive measure on the real line for which all the moments
\[   \nu_n = \int x^n \, d\mu(x)  \]
exist, then we can define orthogonal polynomials $\{P_n,\ n=0,1,2,\ldots\}$ by
\begin{equation} \label{eq:1.1}
         \int P_n(x) x^k\, d\mu(x) = 0, \qquad k=0,1,\ldots,n-1.
\end{equation}
if we normalize the polynomials to be monic, i.e., $P_n(x) = x^n + \cdots$, then these polynomials always exist
and they are unique whenever the support of $\mu$ is an infinite set, and this is due to the fact that
the Hankel matrix of moments
\begin{equation*}  \label{eq:1.2}
    M_n = \begin{pmatrix} \nu_0 & \nu_1 & \cdots & \nu_n \\
                          \nu_1 & \nu_2 & \cdots & \nu_{n+1} \\
                          \vdots & \vdots & \cdots & \vdots \\
                          \nu_n & \nu_{n+1} & \cdots & \nu_{2n} \end{pmatrix} 
\end{equation*}
is positive definite for all $n \geq 0$. The monic orthogonal polynomials are explicitly given by
\begin{equation*}  \label{eq:1.3}
      P_n(x) = \frac{1}{\det M_{n-1}} \begin{vmatrix}  
			  \nu_0 & \nu_1 & \cdots & \nu_n \\
                          \nu_1 & \nu_2 & \cdots & \nu_{n+1} \\
                          \vdots & \vdots & \cdots & \vdots \\
                          \nu_{n-1} & \nu_{n} & \cdots & \nu_{2n-1} \\
                            1 & x & \cdots & x^n 
					\end{vmatrix}.
\end{equation*} 
They satisfy a three-term recurrence relation
\begin{equation} \label{eq:1.4}
     P_{n+1}(x) = (x-b_n) P_n(x) - a_n^2 P_{n-1}(x),
\end{equation}
with initial values $P_0=1$ and $P_{-1}=0$ and recurrence coefficients $b_n \in \mathbb{R}$
and $a_n^2 >0$. An important result (the spectral theorem for orthogonal polynomials or Favard's theorem)
is that any choice of recurrence coefficients $b_n \in \mathbb{R}$ and $a_{n+1}^2 >0$ $(n=0,1,2,\ldots)$
gives a sequence of monic orthogonal polynomials for some positive measure $\mu$ on the real line.
This measure $\mu$ is the spectral measure of the Jacobi operator
\begin{equation*}  \label{eq:1.5}
       J = \begin{pmatrix}   b_0 & a_1 & 0 & 0 & 0 & 0 &\cdots \\[4pt]
                             a_1 & b_1 & a_2 & 0 & 0 & 0 & \cdots \\[4pt]
                              0 & a_2 & b_2 & a_3 & 0 & 0 & \cdots \\
                             0 & 0 & a_3 & \ddots & \ddots & &  \\
                             \vdots & \vdots &  & \ddots
           \end{pmatrix}  
\end{equation*}
or, when the operator is not self-adjoint, a self-adjoint extension of this operator.
For the general theory of orthogonal polynomials we refer to Chihara \cite{Chi}, Freud \cite{Fre}, 
Gautschi \cite{Gau}, Ismail \cite{Ismail} and Szeg\H{o} \cite{Sze},
and for the spectral theory we recommend Simon \cite{Sim}.

Recently a generalization of orthogonal polynomials has been considered. Let $\vec{n} = (n_1,n_2,\ldots,n_r) \in
\mathbb{N}^r$ be a multi-index of size $|\vec{n}| = n_1+n_2+\ldots+n_r$ and suppose $\mu_1,\mu_2,\ldots,\mu_r$
are positive measure on the real line. \textbf{Type I multiple orthogonals} are given by the vector $(A_{\vec{n},1},\ldots,
A_{\vec{n},r})$, where $A_{\vec{n},j}$ is a polynomial of degree $\leq n_j-1$, for which
\begin{equation}  \label{eq:1.6}
     \int  x^k \sum_{j=1}^r A_{\vec{n},j}(x) w_j(x)  \, d\mu(x) = 0, 
    \qquad k=0,1,\ldots,|\vec{n}|-2, 
\end{equation}
where $\mu = \mu_1+\mu_2+\cdots+\mu_r$ and $w_j$ is the Radon-Nikodym derivative $d\mu_j/d\mu$. We use the
normalization
\begin{equation} \label{eq:1.7}
       \int  x^{|\vec{n}|-1} \sum_{j=1}^r A_{\vec{n},j}(x) w_j(x) \, d\mu(x) = 1.
\end{equation}
The equations \eqref{eq:1.6}--\eqref{eq:1.7} are a linear system of $|\vec{n}|$ equations for the unknown
coefficients of $A_{\vec{n},1},\ldots,A_{\vec{n},r}$. This system has a unique solution if the matrix of mixed moments
\begin{equation} \label{eq:1.8}
    M_{\vec{n}} = \left( \begin{matrix} 
                   \nu_0^{(1)} & \nu_1^{(1)} & \cdots & \nu_{n_1-1}^{(1)} \\
                   \nu_1^{(1)} & \nu_2^{(1)} & \cdots & \nu_{n_1}^{(1)} \\
	            \vdots & \vdots & \cdots & \vdots \\
                   \nu_{|\vec{n}|-1}^{(1)} & \nu_{|\vec{n}|}^{(1)} & \cdots & \nu_{\vec{n}+n_1-2}^{(1)} 		
                  \end{matrix} \right|  
                % \left. \begin{matrix} 
                %   \nu_0^{(2)} & \nu_1^{(2)} & \cdots & \nu_{n_2-1}^{(2)} \\
                %   \nu_1^{(2)} & \nu_2^{(2)} & \cdots & \nu_{n_2}^{(2)} \\
	        %    \vdots & \vdots & \cdots & \vdots \\
                %   \nu_{|\vec{n}|-1}^{(2)} & \nu_{|\vec{n}|}^{(1)} & \cdots & \nu_{\vec{n}+n_2-2}^{(2)} 		
                %  \end{matrix} \right| 
		\cdots \cdots
                 \left| \begin{matrix} 
                   \nu_0^{(r)} & \nu_1^{(r)} & \cdots & \nu_{n_r-1}^{(r)} \\
                   \nu_1^{(r)} & \nu_2^{(r)} & \cdots & \nu_{n_r}^{(r)} \\
	            \vdots & \vdots & \cdots & \vdots \\
                   \nu_{|\vec{n}|-1}^{(r)} & \nu_{|\vec{n}|}^{(r)} & \cdots & \nu_{\vec{n}+n_r-2}^{(r)} 		
                  \end{matrix} \right) ,
\end{equation}
where
\[    \nu_n^{(j)} = \int x^n\, d\mu_j(x), \qquad j=1,\ldots,r,  \]
is not singular, in which case we call the multi-index $\vec{n}$ a \textit{normal index}.
The \textbf{type II multiple orthogonal polynomial} is the monic polynomial $P_{\vec{n}}(x) = x^{|\vec{n}|} + \cdots$
of degree $|\vec{n}|$ for which
\begin{eqnarray} \label{eq:1.9}
    \int P_{\vec{n}}(x) x^k\, d\mu_1(x) &=& 0, \qquad k=0,1,\ldots,n_1-1, \nonumber \\
	\vdots	& & \\
    \int P_{\vec{n}}(x) x^k\, d\mu_r(x) &=& 0, \qquad k=0,1,\ldots,n_r-1. \nonumber
\end{eqnarray}
This gives a linear system of $|\vec{n}|$ equations for the $|\vec{n}|$ unknown coefficients of
$P_{\vec{n}}$ and the matrix of this lineéar system is the transpose $M_{\vec{n}}^t$ of \eqref{eq:1.8},
hence $P_{\vec{n}}$ exists and is unique whenever $\vec{n}$ is a normal index.
See Aptekarev \cite{Apt}, Coussement and Van Assche \cite{WVAEC}, Nikishin and Sorokin \cite[Chapter 4, \S 3]{NikSor},
Ismail \cite[Chapter 23]{Ismail}. 
The polynomials on the stepline, i.e.,
\[     p_{kr+j} = P_{(k+1,\ldots,k+1,k,\ldots,k)} \]
are also known as $d$-orthogonal polynomials (with $d=r$), see Douak and Maroni \cite{DouMar},
Ben Cheikh and Douak \cite{BChDou}. 

In this paper we assume that all multi-indices are normal and we investigate the nearest-neighbor recurrence relations
\begin{eqnarray} \label{eq:1.10}  
   x P_{\vec{n}}(x) &=& P_{\vec{n}+\vec{e}_1}(x) + b_{\vec{n},1}P_{\vec{n}}(x) + \sum_{j=1}^r a_{\vec{n},j} P_{\vec{n}-\vec{e}_j}(x), 
   \nonumber \\
      \vdots & &  \\
   x P_{\vec{n}}(x) & = & P_{\vec{n}+\vec{e}_r}(x) + b_{\vec{n},r}P_{\vec{n}}(x) + \sum_{j=1}^r a_{\vec{n},j} P_{\vec{n}-\vec{e}_j}(x),
   \nonumber
\end{eqnarray}
where $\vec{e}_j = (0,\ldots,0,1,0,\ldots,0)$ is the $j$-th standard unit vector with 1 on the $j$-th entry,
and $(a_{\vec{n},1},\ldots,a_{\vec{n},r})$ and $(b_{\vec{n},1},\ldots,b_{\vec{n},r})$ are the recurrence coefficients.
These recurrence relations were derived in \cite[Thm. 23.1.11]{Ismail} and it was shown that
\begin{equation} \label{eq:1.11}
     a_{\vec{n},j} = \frac{\displaystyle \int x^{n_j} P_{\vec{n}}(x)\, d\mu_j(x)}
                            {\displaystyle \int x^{n_j-1} P_{\vec{n}-\vec{e}_j}(x)\, d\mu_j(x)}, 
\end{equation}
and
\begin{equation} \label{eq:1.12}
    b_{\vec{n},j} = \int xP_{\vec{n}}(x) Q_{\vec{n}+\vec{e}_j}(x)\, d\mu(x), 
\end{equation}
where $Q_{\vec{n}} = \sum_{j=1}^r A_{\vec{n},j} w_j$.
There are similar recurrence relations for the type I multiple orthogonal polynomials:
\begin{eqnarray} \label{eq:1.10Q}  
   x Q_{\vec{n}}(x) &=& Q_{\vec{n}-\vec{e}_1}(x) + b_{\vec{n}-\vec{e}_1,1}Q_{\vec{n}}(x) + \sum_{j=1}^r a_{\vec{n},j} Q_{\vec{n}+\vec{e}_j}(x), 
   \nonumber \\
      \vdots & &  \\
   x Q_{\vec{n}}(x) & = & Q_{\vec{n}-\vec{e}_r}(x) + b_{\vec{n}-\vec{e}_r,r}Q_{\vec{n}}(x) + \sum_{j=1}^r a_{\vec{n},j} Q_{\vec{n}+\vec{e}_j}(x).
   \nonumber
\end{eqnarray}
Observe that the same recurrence coefficients $(a_{\vec{n},1},\ldots,a_{\vec{n},r})$ are used but that there is a shift in the
other recurrence coefficients $(b_{\vec{n}-\vec{e}_j,j})_{1 \leq j \leq r}$.

In Section 2 we will show how to find these recurrence relations from a Riemann-Hilbert problem
for multiple orthogonal polynomials, which was given by Van Assche, Geronimo and Kuijlaars \cite{WVAGerKui}.
This approach is not new but merely serves to show that these nearest neighbor recurrence relations enter 
naturally. In Section 3 it will be shown that not every choice of recurrence coefficients $(a_{\vec{n},1},\ldots,a_{\vec{n},r})$ 
and $(b_{\vec{n},1},\ldots,b_{\vec{n},r})$ is possible: our main result is that the recurrence coefficients
satisfy a system of partial difference equations. The boundary conditions are given by the
recurrence coefficients of the orthogonal polynomials with measure $\mu_j$ whenever $\vec{n} = n \vec{e}_j$, i.e.,
when all the components in the multi-index $\vec{n}$ are zero except for the $j$-th component. As such the recurrence
coefficients of multiple orthogonal polynomials are the solution of a discrete integrable system on the
lattice $\mathbb{N}^r$. We will first work with the case $r=2$ to simplify the notation and then give the general result for $r\geq 2$. 
In Section 4 we will show how the nearest neighbor recurrence relations and the partial difference equations for the recurrence coefficients
can be used to give a simple proof of the Christoffel-Darboux formula for multiple orthogonal polynomials, which was first given by
Daems and Kuijlaars \cite{DaeKuij}; see \cite{AFM} for a Christoffel-Darboux formula for multiple orthogonal polynomials of mixed type.
 This Christoffel-Darboux formula plays an important role in the analysis of certain random matrices
\cite{BleKuij2} \cite{Kuijl} and non-intersecting Brownian motions \cite{DaeKuij2}. In Section 5 we will give several examples of multiple orthogonal polynomials and give their recurrence coefficients explicitly. For some of these examples the recurrence coefficients are given here for the first time.
 
\section{The Riemann-Hilbert problem}
In this section we will take $r=2$ to keep the notation simple. The extension to general $r$ is straightforward but
requires bigger matrices.
Suppose that the measures $\mu_1$ and $\mu_2$ are absolutely continuous with weights $w_1$ and $w_2$. 
If $w_1$ and $w_2$ are H\"older continuous, then we can formulate the following Riemann-Hilbert problem:
find a $3\times 3$ matrix function $Y$ such that
\begin{enumerate}
\item $Y$ is analytic on $\mathbb{C} \setminus \mathbb{R}$,
\item the limits $Y_{\pm}(x) = \lim_{\epsilon \to 0+} Y(x\pm i\epsilon)$ exist and
\begin{equation} \label{eq:2.1}
      Y_+(x) = Y_-(x) \begin{pmatrix}
                        1 & w_1(w) & w_2(x) \\
                        0 &  1  & 0  \\
                        0 & 0 & 1
                        \end{pmatrix}, \qquad x \in \mathbb{R}, 
\end{equation}  
\item For $z \to \infty$ one has
\begin{equation} \label{eq:2.2}
     Y(z) = \big( I + \O(1/z) \big) \begin{pmatrix} z^{n+m} & 0 & 0 \\ 0 & z^{-n} & 0 \\ 0 & 0 & z^{-m} \end{pmatrix}.
\end{equation}
\end{enumerate}
In case $w_1$ or $w_2$ are defined on bounded intervals or semi-infinite intervals one additionally needs to specify the behavior
of $Y$ near the endpoints of the intervals. In \cite{WVAGerKui} it was shown that this Riemann-Hilbert problem
has a unique solution in terms of type II multiple orthogonal polynomials when $(n,m)$, $(n-1,m)$ and $(n,m-1)$
are normal indices, i.e.,
\begin{equation}  \label{eq:2.3}
  Y = \begin{pmatrix}
      P_{n,m} & C(P_{n,m}w_1) & C(P_{n,m}w_2) \\
      c_1(n,m) P_{n-1,m} & c_1 C(P_{n-1,m}w_1) & c_1 C(P_{n-1,m}w_2) \\
      c_2(n,m) P_{n,m-1} & c_2 C(P_{n,m-1}w_1) & c_2 C(P_{n,m-1}w_2)
      \end{pmatrix}
\end{equation}
where the Cauchy transform is used
\[   C(Pw) = \frac{1}{2\pi i} \int \frac{P(x)w(x)}{x-z}\, dx  \]
and the constants $c_1$ and $c_2$ are given by
\begin{eqnarray*}
    \frac{1}{c_1(n,m)} = \frac{-1}{2\pi i} \int P_{n-1,m}(x) x^{n-1}w_1(x)\, dx, \label{eq:2.4} \\
    \frac{1}{c_2(n,m)} = \frac{-1}{2\pi i} \int P_{n,m-1}(x) x^{m-1}w_2(x)\, dx. \label{eq:2.5} 
\end{eqnarray*}
In case $\mu_1$ and $\mu_2$ are discrete measures on the set $A=\{x_1,x_2,x_3,\ldots\}$
one can formulate a Riemann-Hilbert problem for a meromorphic matrix function: find a $3 \times 3$ matrix function $Y$ such that:
\begin{enumerate}
\item $Y$ is meromorphic on $\mathbb{C}$ with poles of order one at the points in $A$,
\item the residue of $Y$ at $x_k$ is given by
\[    \lim_{z \to x_k}   Y(z) \begin{pmatrix} 0 & \mu_1(\{x_k\}) & \mu_2(\{x_k\}) \\ 
                                              0 & 0 & 0 \\ 0 & 0 & 0 \end{pmatrix}  \]
\item $Y$ has the asymptotic condition \eqref{eq:2.2}.
\end{enumerate}
The solution of this problem is 
\[    Y(z) = \begin{pmatrix}
      P_{n,m}(z) & \sum_{k=1}^\infty \frac{P_{n,m}(x_k)}{z-x_k} \mu_1(\{x_k\}) 
              & \sum_{k=1}^\infty \frac{P_{n,m}(x_k)}{z-x_k} \mu_2(\{x_k\}) \\
      \hat{c}_1(n,m) P_{n-1,m}(z) & \hat{c}_1 \sum_{k=1}^\infty \frac{P_{n-1,m}(x_k)}{z-x_k} \mu_1(\{x_k\}) 
                            & \hat{c}_1 \sum_{k=1}^\infty \frac{P_{n-1,m}(x_k)}{z-x_k} \mu_2(\{x_k\}) \\
      \hat{c}_2(n,m) P_{n,m-1}(z) & \hat{c}_2 \sum_{k=1}^\infty \frac{P_{n,m-1}(x_k)}{z-x_k} \mu_1(\{x_k\}) 
                            & \hat{c}_2 \sum_{k=1}^\infty \frac{P_{n,m-1}(x_k)}{z-x_k} \mu_2(\{x_k\}) 
      \end{pmatrix}
\]
where
\begin{eqnarray*}
    \frac{1}{\hat{c}_1(n,m)} =  \sum_{k=1}^\infty P_{n-1,m}(x_k) x_k^{n-1} \mu_1(\{x_k\}) \\
    \frac{1}{\hat{c}_2(n,m)} =  \sum_{k=1}^\infty P_{n,m-1}(x_k) x_k^{m-1} \mu_2(\{x_k\}).  
\end{eqnarray*}
We will derive the recurrence relations for type II multiple orthogonal polynomials
from the Riemann-Hilbert problem.  The result is not limited to H\"older continuous weights $w_1$ and
$w_2$ but holds in general and can be derived from the biorthogonality \cite[Thm.~23.1.6]{Ismail}
\[   \int P_{\vec{n}}(x) Q_{\vec{m}}(w)\, d\mu(x) 
  = \begin{cases} 0, & \textrm{if $\vec{m} \leq \vec{n}$,} \\
                  0, & \textrm{if $|\vec{n}| \leq |\vec{m}|-2$,} \\
                  1, & \textrm{if $|\vec{m}|=|\vec{n}|+1$.}
    \end{cases}  \]
We merely use the Riemann-Hilbert problem to indicate that the nearest-neighbor recurrence relations
come out in a natural way. 

\begin{theorem} \label{thm:1}
Suppose all multi-indices $(n,m) \in \mathbb{N}^2$ are normal.
Then the type II multiple orthogonal polynomials satisfy the system of recurrence
relations
\begin{eqnarray}
    P_{n+1,m}(x) = (x-c_{n,m})P_{n,m}(x) - a_{n,m} P_{n-1,m}(x) - b_{n,m} P_{n,m-1}(x), \label{eq:2.6} \\
    P_{n,m+1}(x) = (x-d_{n,m})P_{n,m}(x) - a_{n,m} P_{n-1,m}(x) - b_{n,m} P_{n,m-1}(x), \label{eq:2.7}
\end{eqnarray}
with $a_{0,m}=0$ and $b_{n,0}=0$ for all $n,m \geq 0$.
\end{theorem}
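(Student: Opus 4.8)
The plan is to prove both recurrences simultaneously inside one finite-dimensional space of polynomials. Fix $(n,m)$ and set $R_1:=xP_{n,m}-P_{n+1,m}$ and $R_2:=xP_{n,m}-P_{n,m+1}$. Since $xP_{n,m}$, $P_{n+1,m}$ and $P_{n,m+1}$ are all monic of degree $n+m+1$, both $R_1$ and $R_2$ lie in $\Pi_{n+m}$, the space of polynomials of degree $\le n+m$. Using only the defining orthogonality relations \eqref{eq:1.9} for $P_{n,m}$, $P_{n+1,m}$, $P_{n,m+1}$, the first step is to check that $R_1$ and $R_2$ both belong to
\[
  V:=\Bigl\{R\in\Pi_{n+m}:\ \int R(x)x^k\,d\mu_1(x)=0\ \ (0\le k\le n-2),\quad \int R(x)x^k\,d\mu_2(x)=0\ \ (0\le k\le m-2)\Bigr\}.
\]
For instance $\int R_1x^k\,d\mu_1=\int x^{k+1}P_{n,m}\,d\mu_1-\int x^kP_{n+1,m}\,d\mu_1$ vanishes for $k\le n-2$ because then $k+1\le n-1$ and $k\le n$; the three remaining verifications are identical.

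The crucial claim is that $\dim V=3$ and that $P_{n,m},P_{n-1,m},P_{n,m-1}$ is a basis of $V$ (for $n,m\ge1$). That these three polynomials lie in $V$ is immediate from \eqref{eq:1.9}. They are linearly independent: $P_{n,m}$ has degree $n+m$, while $P_{n-1,m}$ and $P_{n,m-1}$ are monic of degree $n+m-1$ and are distinct, since $\int x^{n-1}\bigl(P_{n-1,m}-P_{n,m-1}\bigr)\,d\mu_1=\int x^{n-1}P_{n-1,m}\,d\mu_1\neq0$, the last fact being the nonvanishing of the quantity appearing in the denominator of \eqref{eq:1.11}, which holds because all indices are normal \cite{Ismail}. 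For the opposite inequality $\dim V\le3$ one shows that the $n+m-2$ linear functionals defining $V$ are linearly independent on $\Pi_{n+m}$. A nontrivial dependence would produce polynomials $A$ of degree $\le n-2$ and $B$ of degree $\le m-2$, not both zero, with $\int x^l\bigl(A(x)w_1(x)+B(x)w_2(x)\bigr)\,d\mu(x)=0$ for $l=0,1,\dots,n+m$; restricting to $l=0,\dots,n+m-3$ gives a homogeneous linear system in the $n+m-2$ coefficients of $A$ and $B$ whose matrix is exactly $M_{(n-1,m-1)}$ from \eqref{eq:1.8}, which is nonsingular because $(n-1,m-1)$ is a normal index. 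Hence $A\equiv B\equiv0$, a contradiction. I expect this step, checking $\dim V\le3$, to be the main obstacle: it is the one place where normality is genuinely used, and the index bookkeeping in the reduction to $M_{(n-1,m-1)}$ has to be done carefully; the rest of the proof is repeated use of \eqref{eq:1.9}.

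Granting this, $R_1,R_2\in V=\operatorname{span}\{P_{n,m},P_{n-1,m},P_{n,m-1}\}$, and rewriting the resulting expansions gives relations of the form \eqref{eq:2.6} and \eqref{eq:2.7}, a priori with coefficients $(c_{n,m},a_{n,m},b_{n,m})$ in the first and $(d_{n,m},a'_{n,m},b'_{n,m})$ in the second. To see $a'_{n,m}=a_{n,m}$ and $b'_{n,m}=b_{n,m}$, subtract the two: $P_{n+1,m}-P_{n,m+1}=R_2-R_1\in\Pi_{n+m}$, and by \eqref{eq:1.9} it is orthogonal to $x^k$ with respect to $\mu_1$ for $k\le n-1$ and to $x^k$ with respect to $\mu_2$ for $k\le m-1$. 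Running the normality argument of the previous paragraph for the index $(n,m)$ (now with the full matrix $M_{(n,m)}$) shows that any element of $\Pi_{n+m}$ satisfying these $n+m$ conditions is a scalar multiple of $P_{n,m}$. Comparing
\[
  R_2-R_1=(d_{n,m}-c_{n,m})P_{n,m}+(a'_{n,m}-a_{n,m})P_{n-1,m}+(b'_{n,m}-b_{n,m})P_{n,m-1}
\]
with this and using the linear independence already established forces $a'_{n,m}=a_{n,m}$ and $b'_{n,m}=b_{n,m}$, which is the content of the theorem.

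Finally, the boundary values follow from the same scheme with a degenerate count. When $n=0$ the conditions involving $\mu_1$ are vacuous, $V$ is defined inside $\Pi_m$ by only $m-1$ conditions, and the argument above gives $\dim V=2$ with basis $\{P_{0,m},P_{0,m-1}\}$; the expansion of $R_1$ therefore has no term in $P_{n-1,m}=P_{-1,m}$, i.e.\ $a_{0,m}=0$. Symmetrically $b_{n,0}=0$. In both degenerate cases the surviving relations reduce, as they must, to the ordinary three-term recurrence \eqref{eq:1.4} for the orthogonal polynomials of $\mu_1$, respectively $\mu_2$.
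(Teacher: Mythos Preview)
Your argument is correct and is genuinely different from the paper's. The paper proves Theorem~\ref{thm:1} via the Riemann--Hilbert problem: it forms the transfer matrices $R_1(n,m)=Y_{n+1,m}Y_{n,m}^{-1}$ and $R_2(n,m)=Y_{n,m+1}Y_{n,m}^{-1}$, shows they are entire by cancellation of the jump, and then pins them down as explicit degree-one matrix polynomials via Liouville's theorem and the asymptotics \eqref{eq:2.2}; the recurrences are read off from the $(1,1)$-entries. Your route is the purely algebraic one: you identify the three-dimensional subspace $V\subset\Pi_{n+m}$ cut out by the orthogonality conditions common to $P_{n,m},P_{n-1,m},P_{n,m-1}$, verify $\dim V=3$ using normality of $(n-1,m-1)$, and expand $R_1,R_2$ in that basis; the equality of the lower coefficients then follows from the one-dimensionality of the analogous space for the index $(n,m)$. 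The paper in fact remarks just before the theorem that the result ``holds in general and can be derived from the biorthogonality'' and that the Riemann--Hilbert derivation is chosen only to show that the nearest-neighbor recurrences arise naturally; your proof is precisely this elementary derivation, and it has the advantage of applying to arbitrary measures (not just H\"older continuous or discrete ones). What the paper's approach buys in return is the explicit identification of the coefficients with entries of the matrix $A(n,m)$ and, more importantly, the transfer matrices $R_1,R_2$ that are reused immediately afterwards to derive the compatibility relations of Theorem~\ref{thm1}.
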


\begin{proof}
First of all we observe that $\det Y$ is an analytic function in $\mathbb{C} \setminus \mathbb{R}$
which has no jump on the real axis, hence $\det Y$ is an entire function. Its behavior near infinity
is $\det Y(z) = 1 + \O(1/z)$ hence by Liouville's theorem we find that $\det Y = 1$. We can therefore consider
the matrix
\[     R_1(n,m) = Y_{n+1,m} Y_{n,m}^{-1}, \]
where the subscript $(n,m)$ is used for the Riemann-Hilbert problem with the type II multiple orthogonal
polynomial $P_{n,m}$ in the entry of the first row and first column of $Y_{n,m}$.
Clearly $R_1$ is an analytic function on $\mathbb{C} \setminus \mathbb{R}$, and since $Y_{n,m}$
and $Y_{n+1,m}$ have the same jump matrix on $\mathbb{R}$ we see that $R_1$ has no jump on $\mathbb{R}$.
Hence $R_1$ is an entire matrix function. If we write the asymptotic condition \eqref{eq:2.2} as
\[    Y_{n,m}(z) = \left( I + \frac{A(n,m)}{z} + \O(1/z^2) \right) 
		\begin{pmatrix} z^{n+m} & 0 & 0 \\ 0 & z^{-n} & 0 \\ 0 & 0 & z^{-m} \end{pmatrix}, \]
where $A(n,m)$ is the $3\times 3$ matrix coefficient of $1/z$ in the $\O(1/z)$ term of \eqref{eq:2.2}, then after some calculus we find
\[    R_1(n,m) = \begin{pmatrix}
                  z+A_{1,1}(n+1,m)-A_{1,1}(n,m)  & -A_{1,2}(n,m) & -A_{1,3}(n,m)  \\
                  A_{2,1}(n+1,m)  & 0 & 0 \\
                  A_{3,1}(n+1,m)  & 0 & 1  
                 \end{pmatrix} + \O(1/z), \]
where $A_{i,j}(n,m)$ is the entry on row $i$ and column $j$ of $A(n,m)$.
Liouville's theorem then implies that $R_1$ is the matrix polynomial
\begin{equation} \label{eq:2.8}
    R_1(n,m) = \begin{pmatrix}
                  z+A_{1,1}(n+1,m)-A_{1,1}(n,m) & -A_{1,2}(n,m) & -A_{1,3}(n,m) \\
                  A_{2,1}(n+1,m) & 0 &  0 \\
                  A_{3,1}(n+1,m) & 0 &  1  
                 \end{pmatrix},
\end{equation}
and we can therefore write
\begin{equation} \label{eq:2.9}
      Y_{n+1,m} = R_1(n,m) Y_{n,m}.
\end{equation}
In a similar way we also have
\begin{equation} \label{eq:2.10}
      Y_{n,m+1} = R_2(n,m) Y_{n,m}, 
\end{equation}
with
\begin{equation} \label{eq:2.11}
    R_2(n,m) = \begin{pmatrix}
                  z+A_{1,1}(n,m+1)-A_{1,1}(n,m) & -A_{1,2}(n,m) & -A_{1,3}(n,m) \\
                  A_{2,1}(n,m+1) & 1 &  0 \\
                  A_{3,1}(n,m+1) & 0 &  0  
                 \end{pmatrix}.
\end{equation}
 If we now work out the $(1,1)$-entry of \eqref{eq:2.9}, then we find \eqref{eq:2.6} with
\[    c_{n,m} = A_{1,1}(n,m)-A_{1,1}(n+1,m)  \]
and
\[    a_{n,m} = c_1(n,m)A_{1,2}(n,m), \quad b_{n,m} = c_2(n,m)A_{1,3}(n,m). \]
Similarly, if we work out the $(1,1)$-entry of \eqref{eq:2.10}, then we find \eqref{eq:2.7} with
\[    d_{n,m} = A_{1,1}(n,m)-A_{1,1}(n,m+1).  \]
\end{proof}

Observe that $\det R_1(n,m) = 1$ which implies that $A_{1,2}(n,m)A_{2,1}(n+1,m)=1$.
If we work out the $(2,1)$-entry of \eqref{eq:2.9} then we find
\[     c_1(n+1,m) P_{n,m} = A_{2,1}(n+1,m) P_{n,m}, \]
so that $c_1(n+1,m)=A_{2,1}(n+1,m)$.
Combined with the previous result, this gives
\[    a_{n,m} = \frac{c_1(n,m)}{c_1(n+1,m)}, \]
which corresponds to \eqref{eq:1.11} when $r=2$.
Similarly the identity $\det R_2(n,m)=1$ implies that $A_{1,3}(n,m)A_{3,1}(n,m+1)=1$ and the $(3,1)$-entry
of \eqref{eq:2.10} gives $c_2(n,m+1)=A_{3,1}(n,m+1)$, so that
\[    b_{n,m} = \frac{c_2(n,m)}{c_2(n,m+1)}.  \]

\begin{theorem} \label{thm:2.2}
Denote the moments by 
\[   \nu_k^{(j)} = \int x^k w_j(x)\, dx \]
and the moment matrix by
\begin{equation} \label{eq:2.12}
   M_{n,m} = \left( \begin{matrix}
                   \nu_0^{(1)} & \nu_1^{(1)} & \cdots & \nu_{n-1}^{(1)} \\
                   \nu_1^{(1)} & \nu_2^{(1)} & \cdots & \nu_{n}^{(1)} \\
	            \vdots & \vdots & \cdots & \vdots \\
                   \nu_{n+m-1}^{(1)} & \nu_{n+m}^{(1)} & \cdots & \nu_{2n+m-2}^{(1)} \end{matrix}		
           \begin{matrix}
                   \nu_0^{(2)} & \nu_1^{(2)} & \cdots & \nu_{m-1}^{(2)} \\
                   \nu_1^{(2)} & \nu_2^{(2)} & \cdots & \nu_{m}^{(2)} \\
	            \vdots & \vdots & \cdots & \vdots \\
                   \nu_{n+m-1}^{(2)} & \nu_{n+m}^{(2)} & \cdots & \nu_{n+2m-2}^{(2)} 		
                  \end{matrix} \right). 
\end{equation}
Then the recurrence coefficients can be written as 
\[    a_{n,m} = \frac{\det M_{n+1,m} \det M_{n-1,m}}{\big(\det M_{n,m}\big)^2}. \]
and
\[     b_{n,m} = \frac{\det M_{n,m+1} \det M_{n,m-1}}{\big(\det M_{n,m}\big)^2}. \]
Furthermore we have
\[   d_{n,m}-c_{n,m} = \frac{\det M_{n,m} \det M_{n+1,m+1}}{\det M_{n+1,m} \det M_{n,m+1}}.  \]
\end{theorem}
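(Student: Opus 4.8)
The plan is to express all three quantities as ratios of the ``norms''
\[
  h_{n,m}^{(1)} = \int x^{n} P_{n,m}(x)\, d\mu_1(x), \qquad
  h_{n,m}^{(2)} = \int x^{m} P_{n,m}(x)\, d\mu_2(x),
\]
which are nonzero because all indices are normal, and then to evaluate each $h_{n,m}^{(j)}$ as a ratio of determinants of shifted moment matrices. First I would record that
\[
  a_{n,m} = \frac{h_{n,m}^{(1)}}{h_{n-1,m}^{(1)}}, \qquad b_{n,m} = \frac{h_{n,m}^{(2)}}{h_{n,m-1}^{(2)}};
\]
these are just \eqref{eq:1.11} for $r=2$, but they also drop out directly by integrating \eqref{eq:2.6} against $x^{n-1}\,d\mu_1$ and \eqref{eq:2.7} against $x^{m-1}\,d\mu_2$, using that $P_{n+1,m}\perp x^{n-1}$ in $\mu_1$, that $P_{n,m-1}\perp x^{n-1}$ in $\mu_1$, and the analogous facts for $\mu_2$. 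For the last formula, subtract \eqref{eq:2.6} from \eqref{eq:2.7} to get
\[
  P_{n,m+1}(x) - P_{n+1,m}(x) = (c_{n,m}-d_{n,m}) P_{n,m}(x),
\]
integrate against $x^{m}\,d\mu_2$, and use $\int P_{n,m+1}(x)x^{m}\,d\mu_2 = 0$ together with $\int P_{n+1,m}(x)x^{m}\,d\mu_2 = h_{n+1,m}^{(2)}$ to obtain $d_{n,m}-c_{n,m} = h_{n+1,m}^{(2)}/h_{n,m}^{(2)}$.

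The core step is a Heine-type determinantal representation of $P_{n,m}$. I would form the $(n+m+1)\times(n+m+1)$ bordered determinant $D_{n,m}(x)$ obtained from $M_{n,m}$ in \eqref{eq:2.12} by appending the column $(1,x,x^{2},\dots,x^{n+m})^{t}$ on the right and the row $(\nu_{n+m}^{(1)},\dots,\nu_{2n+m-1}^{(1)},\nu_{n+m}^{(2)},\dots,\nu_{n+2m-1}^{(2)},x^{n+m})$ at the bottom. Expanding along the last column shows $D_{n,m}(x)$ is a polynomial of degree $n+m$ whose leading coefficient is $\det M_{n,m}$, and replacing that last column by $(\nu_l^{(1)},\nu_{l+1}^{(1)},\dots)^{t}$ for $0\le l\le n-1$ (respectively by the corresponding $\mu_2$-column for $0\le l\le m-1$) yields a determinant with two equal columns. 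Hence $P_{n,m}(x) = D_{n,m}(x)/\det M_{n,m}$.

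Feeding this into the integrals defining the norms does the rest. Integrating $D_{n,m}$ against $x^{n}\,d\mu_1$ replaces the last column by the $\mu_1$-moment column of shift $n$; moving that column past the $m$ columns of the $\mu_2$-block turns the determinant into $\det M_{n+1,m}$ up to the sign $(-1)^{m}$, so $h_{n,m}^{(1)} = (-1)^{m}\det M_{n+1,m}/\det M_{n,m}$. Integrating against $x^{m}\,d\mu_2$ instead needs no permutation, since the $\mu_2$-block already sits on the right, giving $h_{n,m}^{(2)} = \det M_{n,m+1}/\det M_{n,m}$. Substituting these into the three ratios above produces the stated formulas; note the factors $(-1)^{m}$ cancel in the ratio for $a_{n,m}$.

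I expect the only real friction to be the bookkeeping in this last step: verifying that the column one lands on is exactly the extra column of $M_{n+1,m}$ (respectively $M_{n,m+1}$, $M_{n+1,m+1}$) and correctly tracking the sign of the single column permutation. There is no analytic difficulty once \eqref{eq:1.11} and the determinantal representation of $P_{n,m}$ are in hand; everything else is linear algebra with the orthogonality relations \eqref{eq:1.9}.
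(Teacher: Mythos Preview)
Your proposal is correct and follows the paper's line almost exactly: both use the Heine-type determinantal formula for $P_{n,m}$, compute the two ``norms'' as $(-1)^m\det M_{n+1,m}/\det M_{n,m}$ and $\det M_{n,m+1}/\det M_{n,m}$, and plug these into the ratio \eqref{eq:1.11}. The one point where you diverge is the derivation of $d_{n,m}-c_{n,m}$: the paper reads off the $(3,1)$-entry of the Riemann--Hilbert transfer relation $Y_{n+1,m}=R_1(n,m)Y_{n,m}$ and compares it with the subtracted recurrence to obtain $d_{n,m}-c_{n,m}=c_2(n,m+1)/c_2(n+1,m+1)$, whereas you integrate the identity $P_{n,m+1}-P_{n+1,m}=(c_{n,m}-d_{n,m})P_{n,m}$ directly against $x^m\,d\mu_2$ to get $d_{n,m}-c_{n,m}=h_{n+1,m}^{(2)}/h_{n,m}^{(2)}$. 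These are the same ratio, but your route is more self-contained and does not rely on the Riemann--Hilbert matrices $R_1,R_2$ or the identification $c_2(n,m+1)=A_{3,1}(n,m+1)$; the paper's route, on the other hand, simultaneously yields the companion formula \eqref{d-c1} via the $(2,1)$-entry of $R_2$, which you do not recover (nor need) here.
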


\begin{proof}
The type II multiple orthogonal polynomial can be written as
\[    P_{n,m}(x) = \frac{1}{\det M_{n,m}}
          \left|\begin{matrix}
                   \nu_0^{(1)} & \nu_1^{(1)} & \cdots & \nu_{n-1}^{(1)} \\
                   \nu_1^{(1)} & \nu_2^{(1)} & \cdots & \nu_{n}^{(1)} \\
	            \vdots & \vdots & \cdots & \vdots \\
                   \nu_{n+m}^{(1)} & \nu_{n+m+1}^{(1)} & \cdots & \nu_{2n+m-1}^{(1)} \end{matrix}		
           \begin{matrix}
                   \nu_0^{(2)} & \nu_1^{(2)} & \cdots & \nu_{m-1}^{(2)} \\
                   \nu_1^{(2)} & \nu_2^{(2)} & \cdots & \nu_{m}^{(2)} \\
	            \vdots & \vdots & \cdots & \vdots \\
                   \nu_{n+m}^{(2)} & \nu_{n+m+1}^{(2)} & \cdots & \nu_{n+2m-1}^{(2)} 		
                  \end{matrix} 
          \begin{matrix} 1 \\ x \\ \vdots \\ x^{n+m} \end{matrix} \right|     ,
\]
where $M_{n,m}$ is the moment matrix given in \eqref{eq:2.12}. This formula indeed has all the
orthogonality properties \eqref{eq:1.9}, and from it one easily finds
\[   \int x^n P_{n,m}(x)w_1(x)\, dx = \frac{(-1)^m \det M_{n+1,m}}{\det M_{n,m}}, \]
so that
\[    a_{n,m} = \frac{\det M_{n+1,m} \det M_{n-1,m}}{\big(\det M_{n,m}\big)^2}. \]
In a similar way
\[    \int x^m P_{n,m}(x)w_2(x)\, dx = \frac{\det M_{n,m+1}}{\det M_{n,m}} \]
and
\[    b_{n,m} = \frac{\det M_{n,m+1} \det M_{n,m-1}}{\big(\det M_{n,m}\big)^2}. \]
Take the $(3,1)$-entry of \eqref{eq:2.9} to find
\[    c_2(n+1,m) P_{n+1,m-1} = A_{3,1}(n+1,m) P_{n,m} + c_2(n,m) P_{n,m-1}. \]
On the other hand, if we subtract \eqref{eq:2.6} from \eqref{eq:2.7} then we have
\[     P_{n,m+1}-P_{n+1,m} = (c_{n,m}-d_{n,m}) P_{n,m}. \]
If we increase $m$ by one in the penultimate formula, then comparison with the previous formula gives
\begin{equation}  \label{d-c2}
      d_{n,m}-c_{n,m} = \frac{c_2(n,m+1)}{c_2(n+1,m+1)}
                        = \frac{\det M_{n,m} \det M_{n+1,m+1}}{\det M_{n+1,m} \det M_{n,m+1}}.  
\end{equation}
Similarly, if we take the $(2,1)$-entry of \eqref{eq:2.10}, then
\[     c_1(n,m+1)P_{n-1,m+1} = A_{2,1}(n,m+1)P_{n,m} + c_1(n,m) P_{n-1,m}, \]
and if we increase $n$ by one and compare with the previous formulas, then
\begin{equation}  \label{d-c1}
    d_{n,m}-c_{n,m} = - \frac{c_1(n+1,m)}{c_1(n+1,m+1)} 
		       =  \frac{\det M_{n,m} \det M_{n+1,m+1}}{\det M_{n+1,m} \det M_{n,m+1}}. 
\end{equation}
\end{proof}

Observe that the arrays $c$ and $d$ can be obtained if one knows the array $A_{1,1}$ and the latter contains
the second coefficients of the type II multiple orthogonal polynomials
\[   P_{n,m}(x) = x^{n+m} + A_{1,1}(n,m) x^{n+m-1} + \cdots. \]
Each $A_{1,1}(n,m)$ can also be expressed as the ratio of two moment matrices, 
\[     A_{1,1}(n,m) = - \frac{\det \widehat{M}_{n,m}}{\det M_{n,m}} \]
and the moment matrix in the numerator is
\[  \widehat{M}_{n,m} \left( \begin{matrix}
                   \nu_0^{(1)} & \nu_1^{(1)} & \cdots & \nu_{n-1}^{(1)} \\
                   \nu_1^{(1)} & \nu_2^{(1)} & \cdots & \nu_{n}^{(1)} \\
	            \vdots & \vdots & \cdots & \vdots \\
                   \nu_{n+m-2}^{(1)} & \nu_{n+m-1}^{(1)} & \cdots & \nu_{2n+m-3}^{(1)} \\ 
                    \nu_{n+m}^{(1)} & \nu_{n+m+1}^{(1)} & \cdots & \nu_{2n+m-1}^{(1)}                    
                        \end{matrix}		
           \begin{matrix}
                   \nu_0^{(2)} & \nu_1^{(2)} & \cdots & \nu_{m-1}^{(2)} \\
                   \nu_1^{(2)} & \nu_2^{(2)} & \cdots & \nu_{m}^{(2)} \\
	            \vdots & \vdots & \cdots & \vdots \\
                   \nu_{n+m-2}^{(2)} & \nu_{n+m-1}^{(2)} & \cdots & \nu_{n+2m-3}^{(2)} \\
  		   \nu_{n+m}^{(2)} & \nu_{n+m+1}^{(2)} & \cdots & \nu_{n+2m-1}^{(2)} \\
                  \end{matrix} \right), \]
i.e., it is the moment matrix $M_{n,m}$
from \eqref{eq:2.12} but with the last row replaced by   
\[    \begin{matrix} \nu_{n+m}^{(1)} & \nu_{n+m+1}^{(1)} & \cdots & \nu_{2n+m-1}^{(1)} &
       \nu_{n+m}^{(2)} & \nu_{n+m+1}^{(2)} & \cdots & \nu_{n+2m-1}^{(2)} \end{matrix} \] 

\section{Partial difference equations}
We will first consider the case $r=2$ to simplify the notation. The case for general $r$ is handled in the second part of the section. 

The recurrence coefficients $a_{n,m},b_{n,m},c_{n,m},d_{n,m}$ are not arbitrary sequences. They satisfy
certain partial difference equations, which are given in the following theorem.

\begin{theorem} \label{thm1}
Suppose that all the indices $(n,m) \in \mathbb{N}^2$ are normal.
The recurrence coefficients in the recurrence relations \eqref{eq:2.6}--\eqref{eq:2.7} for
type II multiple orthogonal polynomials satisfy the following equations
\begin{eqnarray}
    d_{n+1,m}-d_{n,m} &=& c_{n,m+1} - c_{n,m},  \label{eq:3.1}  \\
    b_{n+1,m}-b_{n,m+1} + a_{n+1,m}-a_{n,m+1} &=& \det \begin{pmatrix} d_{n+1,m} & d_{n,m} \\ 
	                                                             c_{n,m+1} & c_{n,m} \end{pmatrix},
  \qquad\qquad \label{eq:3.2} \\
    \frac{a_{n,m+1}}{a_{n,m}} &=& \frac{c_{n,m}-d_{n,m}}{c_{n-1,m}-d_{n-1,m}},  \label{eq:3.3} \\
    \frac{b_{n+1,m}}{b_{n,m}} &=& \frac{c_{n,m}-d_{n,m}}{c_{n,m-1}-d_{n,m-1}}.  \label{eq:3.4}
\end{eqnarray}
\end{theorem}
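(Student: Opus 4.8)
The plan is to derive all four relations at once from the \emph{compatibility} of the two recurrences \eqref{eq:2.6} and \eqref{eq:2.7}: the polynomial $P_{n+1,m+1}$ is reached from $P_{n,m}$ by two different one-step routes, and equating the two expressions will collapse — after a reduction to a fixed four-element basis — into a single vector identity whose components are precisely \eqref{eq:3.1}--\eqref{eq:3.4}.

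First I would record the basic identity obtained by subtracting \eqref{eq:2.7} from \eqref{eq:2.6},
\[
   P_{n,m+1}-P_{n+1,m} = \Delta_{n,m}\,P_{n,m}, \qquad \Delta_{n,m}:=c_{n,m}-d_{n,m},
\]
noting that $\Delta_{n,m}\neq 0$ by the last formula of Theorem~\ref{thm:2.2} together with normality. Then I would write $P_{n+1,m+1}$ in two ways: once from \eqref{eq:2.7} with the index shifted to $(n+1,m)$, and once from \eqref{eq:2.6} with the index shifted to $(n,m+1)$. Equating these and moving everything to one side gives a linear combination of $P_{n+1,m}$, $P_{n,m}$, $P_{n-1,m}$, $P_{n,m-1}$, $P_{n,m+1}$, $P_{n-1,m+1}$, $P_{n+1,m-1}$ plus the term $x\bigl(P_{n+1,m}-P_{n,m+1}\bigr)$. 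The three off-basis polynomials are removed using the basic identity in its shifted forms, $P_{n,m+1}=P_{n+1,m}+\Delta_{n,m}P_{n,m}$, $P_{n-1,m+1}=P_{n,m}+\Delta_{n-1,m}P_{n-1,m}$, $P_{n+1,m-1}=P_{n,m}-\Delta_{n,m-1}P_{n,m-1}$; and since $x(P_{n+1,m}-P_{n,m+1})=-\Delta_{n,m}\,xP_{n,m}$, one further application of \eqref{eq:2.6} to $xP_{n,m}$ reduces the whole expression to the form
\[
   \alpha P_{n+1,m}+\beta P_{n,m}+\gamma P_{n-1,m}+\delta P_{n,m-1}=0 ,
\]
with $\alpha,\beta,\gamma,\delta$ explicit in the recurrence coefficients.

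The one step that needs an argument rather than bookkeeping is that $P_{n+1,m},P_{n,m},P_{n-1,m},P_{n,m-1}$ are linearly independent, forcing $\alpha=\beta=\gamma=\delta=0$. Degrees dispose of $\alpha$ and $\beta$ at once ($P_{n+1,m}$ and $P_{n,m}$ are the unique members of degrees $n+m+1$ and $n+m$); what remains is $\gamma P_{n-1,m}=-\delta P_{n,m-1}$, where both polynomials are monic of degree $n+m-1$, so $\delta=-\gamma$ and $\gamma(P_{n-1,m}-P_{n,m-1})=0$. These two polynomials differ: by the integral evaluations in the proof of Theorem~\ref{thm:2.2}, $\int x^{m-1}P_{n,m-1}\,d\mu_2=\det M_{n,m}/\det M_{n,m-1}\neq 0$, while $\int x^{m-1}P_{n-1,m}\,d\mu_2=0$ by orthogonality; hence $\gamma=\delta=0$. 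At boundary indices the term with $P_{n-1,m}$ or $P_{n,m-1}$ drops out through the conventions $a_{0,m}=b_{n,0}=0$, and the corresponding relation is read only where every entry is defined.

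Finally I would read off the four vanishing coefficients. The equation $\alpha=c_{n,m+1}-d_{n+1,m}-\Delta_{n,m}=0$ is \eqref{eq:3.1}; the equations $\gamma=a_{n,m+1}\Delta_{n-1,m}-a_{n,m}\Delta_{n,m}=0$ and $\delta=b_{n+1,m}\Delta_{n,m-1}-b_{n,m}\Delta_{n,m}=0$ give \eqref{eq:3.3} and \eqref{eq:3.4}. The remaining equation $\beta=0$ reads $b_{n+1,m}-b_{n,m+1}+a_{n+1,m}-a_{n,m+1}=\Delta_{n,m}\,(c_{n,m+1}-c_{n,m})$; using \eqref{eq:3.1} to write $c_{n,m+1}-c_{n,m}=d_{n+1,m}-d_{n,m}$ and $\Delta_{n,m}=c_{n,m}-d_{n,m}$, the right-hand side expands to $d_{n+1,m}c_{n,m}-d_{n,m}c_{n,m+1}$, which is the determinant in \eqref{eq:3.2}. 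I expect the reduction to the four-element basis to be the most error-prone part — it is mechanical but must be carried out carefully — while the linear-independence step is the only place where normality is genuinely used.
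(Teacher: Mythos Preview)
Your argument is correct: the compatibility of the two routes to $P_{n+1,m+1}$, followed by reduction to the four polynomials $P_{n+1,m},P_{n,m},P_{n-1,m},P_{n,m-1}$ via the shifted identities $P_{n,m+1}-P_{n+1,m}=\Delta_{n,m}P_{n,m}$ and one further use of \eqref{eq:2.6} for $xP_{n,m}$, does produce exactly \eqref{eq:3.1}--\eqref{eq:3.4}, and your linear-independence check (degrees for $\alpha,\beta$; the integral $\int x^{m-1}P_{n,m-1}\,d\mu_2\neq 0$ versus $\int x^{m-1}P_{n-1,m}\,d\mu_2=0$ for $\gamma,\delta$) is valid.

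The route is genuinely different from the paper's $r=2$ proof, which encodes the same compatibility as the matrix identity $R_2(n+1,m)R_1(n,m)=R_1(n,m+1)R_2(n,m)$ for the Riemann--Hilbert transfer matrices and reads off the entries $S_{i,j}$; there the relations \eqref{eq:3.3}--\eqref{eq:3.4} come out via the auxiliary formulas \eqref{d-c1}--\eqref{d-c2} for $d_{n,m}-c_{n,m}$ in terms of the normalizing constants $c_1,c_2$, rather than through your shifted $\Delta$-identities. Your approach is more elementary and self-contained (it needs neither the Riemann--Hilbert problem nor the constants $c_1(n,m),c_2(n,m)$), and in fact it is very close in spirit to the paper's proof of the general-$r$ Theorem~\ref{thm:diffr}, where the same compatibility is written for transfer matrices acting on the vector $(P_{\vec{n}},P_{\vec{n}-\vec{e}_1},\ldots,P_{\vec{n}-\vec{e}_r})^t$ and the conclusion again rests on linear independence of these nearest-neighbor polynomials. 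What the Riemann--Hilbert formulation buys is a systematic bookkeeping in which the side relations \eqref{d-c1}--\eqref{d-c2} are produced simultaneously; what your approach buys is that nothing beyond \eqref{eq:2.6}--\eqref{eq:2.7} and normality is invoked.
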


\begin{proof}
We obtain these equations by observing that $Y_{n+1,m+1}$ can be obtained in two different ways from the information
in $Y_{n,m}$. First we can compute $Y_{n+1,m}$ by using \eqref{eq:2.6} and then we can use \eqref{eq:2.7} (with $n$
replaced by $n+1$) to find $Y_{n+1,m+1}$, i.e.
\[     Y_{n+1,m+1} = R_2(n+1,m)R_1(n,m) Y_{n,m}.  \]
On the other hand we can also compute $Y_{n,m+1}$ by using \eqref{eq:2.7} and then use \eqref{eq:2.6} (with $m$ replaced by $m+1$) to find $Y_{n+1,m+1}$,
\[     Y_{n+1,m+1} = R_1(n,m+1)R_2(n,m) Y_{n,m}.  \]
Since $\det Y=1$ this implies that
\begin{equation}  \label{eq:3.5}
       R_2(n+1,m)R_1(n,m) = R_1(n,m+1)R_2(n,m).
\end{equation}
Observe that
\[   R_1(n,m) = \begin{pmatrix}
                 z-c_{n,m} & -a_{n,m}/c_1(n,m) & -b_{n,m}/c_2(n,m) \\
                 c_1(n+1,m) & 0 & 0 \\
                 c_2(n+1,m) & 0 & 1
                \end{pmatrix}, \]
\[     R_2(n,m) = \begin{pmatrix}
                 z-d_{n,m} & -a_{n,m}/c_1(n,m) & -b_{n,m}/c_2(n,m) \\
                 c_1(n,m+1) & 1 & 0 \\
                 c_2(n,m+1) & 0 & 0
                \end{pmatrix}, \]
hence \eqref{eq:3.5} is equivalent to
\[   \begin{pmatrix}
      S_{1,1}(z) & S_{1,2}(z) & S_{1,3}(z) \\
      S_{2,1}(z) & 0 & 0 \\
      S_{3,1}(z) & 0 & 0
     \end{pmatrix} =
     \begin{pmatrix} 0 & 0 & 0 \\ 0 & 0 & 0 \\ 0 & 0 & 0 \end{pmatrix}  \]
where
\begin{eqnarray*}
  S_{1,1}(z) &=& (z-c_{n,m})(z-d_{n+1,m})-a_{n+1,m}-b_{n+1,m}-(z-d_{n,m})(z-c_{n,m+1}) \\
             && +\ a_{n,m+1}+b_{n,m+1} , \\
  S_{1,2}(z) &=& \displaystyle -(z-d_{n+1,m})\frac{a_{n,m}}{c_1(n,m)}  
                +(z-c_{n,m+1}) \frac{a_{n,m}}{c_1(n,m)} + \frac{a_{n,m+1}}{c_1(n,m+1)} \\
  S_{1,3}(z) &=&  \displaystyle -(z-d_{n+1,m})\frac{b_{n,m}}{c_2(n,m)}-\frac{b_{n+1,m}}{c_2(n+1,m)}
                + (z-c_{n,m+1}) \frac{b_{n,m}}{c_2(n,m)} \\
  S_{2,1}(z) &=& (z-c_{n,m})c_1(n+1,m+1) + c_1(n+1,m)-(z-d_{n,m})c_1(n+1,m+1) \\
  S_{3,1}(z) &=& (z-c_{n,m})c_2(n+1,m+1) - (z-d_{n,m})c_2(n+1,m+1)-c_2(n,m+1).
\end{eqnarray*}
Observe that $S_{1,1}$ is a polynomial of degree one and the remaining quantities $S_{1,2}, S_{1,3}, S_{2,1}$ and
$S_{3,1}$ are polynomials of degree zero. If we set the coefficient of $z$ in $S_{1,1}$ equal to zero, then
we find \eqref{eq:3.1}. If we put the constant term in $S_{1,1}$ equal to zero, then we find \eqref{eq:3.2}.
If we put $S_{1,2}$ equal to zero and use \eqref{d-c1}, then we find \eqref{eq:3.3}, and if we put $S_{1,3}$
equal to zero and use \eqref{d-c2}, then we find \eqref{eq:3.4}. If we put $S_{2,1}$ and $S_{3,1}$ equal to zero,
then we find \eqref{d-c1} and \eqref{d-c2}.
\end{proof}

If we introduce the difference operators $\Delta_1, \Delta_2, \nabla_1$ and $\nabla_2$ for which
\[   \Delta_1 f(n,m) = f(n+1,m)-f(n,m), \quad \Delta_2 f(n,m)= f(n,m+1)-f(n,m)  \]
\[   \nabla_1 f(n,m) = f(n,m)-f(n-1,m), \quad \nabla_2 f(n,m) = f(n,m)-f(n,m-1), \]
then we find the partial difference equations
\begin{eqnarray*}   
      \Delta_1 d &=& \Delta_2 c, \\
      \Delta_1 (a+b) - \Delta_2 (a+b) &=& \det \begin{pmatrix} 
					\Delta_1 d & d \\ \Delta_2 c & c 	
					\end{pmatrix} \\
      \Delta_2 \log a = \nabla_1 \log (c-d) , &\quad& \Delta_1 \log b = \nabla_2 \log (c-d).  
\end{eqnarray*}

The result for general $r$ has more partial difference relations. In fact for each pair $(i,j)$ with $i \neq j$ we have
partial difference relations as in Theorem \ref{thm1} for the partial difference operators $\Delta_i,\Delta_j$ and $\nabla_i,\nabla_j$
with
\[  \Delta_i f(\vec{n}) = f(\vec{n}+\vec{e}_i) - f(\vec{n}), \qquad \nabla_i f(\vec{n}) = f(\vec{n})-f(\vec{n}-\vec{e}_i). \]
We will not use the Riemann-Hilbert problem explicitly but some of the elements of the proof use ideas from the Riemann-Hilbert
approach, in particular the transfer matrices matrices $R_k(\vec{n})$ are inspired by the matrices $R_1(n,m)$ and $R_2(n,m)$ which we used
for the case $r=2$.  

\begin{theorem}  \label{thm:diffr}
Suppose all multi-indices $\vec{n} \in \mathbb{N}^r$ are normal. Suppose $1 \leq i \neq j \leq r$, then the recurrence coefficients for
the nearest neighbor recurrence relations (\ref{eq:1.10}) satisfy
\begin{eqnarray}
      b_{\vec{n}+\vec{e}_i,j} - b_{\vec{n},j} &=& b_{\vec{n}+\vec{e}_j,i} - b_{\vec{n},i},   \label{eq:r1} \\
      \sum_{k=1}^r a_{\vec{n}+\vec{e}_j,k} - \sum_{k=1}^r a_{\vec{n}+\vec{e}_i,k}
       &=& \det \begin{pmatrix}  b_{\vec{n}+\vec{e}_j,i} & b_{\vec{n},i} \\ b_{\vec{n}+\vec{e}_i,j} & b_{\vec{n},j} \end{pmatrix}, \label{eq:r2} \\
  \frac{a_{\vec{n},i}}{a_{\vec{n}+\vec{e}_j,i}} &=& \frac{b_{\vec{n}-\vec{e}_i,j}-b_{\vec{n}-\vec{e}_i,i}}{b_{\vec{n},j}-b_{\vec{n},i}}, \label{eq:r3} 
\end{eqnarray}
\end{theorem}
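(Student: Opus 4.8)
The plan is to mimic the $r=2$ argument of Theorem \ref{thm1}, replacing the $3\times 3$ Riemann--Hilbert matrices by $(r+1)\times(r+1)$ transfer matrices. For each $i$ I would introduce the matrix $R_i(\vec{n})$ encoding the recurrence relation $xP_{\vec{n}} = P_{\vec{n}+\vec{e}_i} + b_{\vec{n},i}P_{\vec{n}} + \sum_k a_{\vec{n},k}P_{\vec{n}-\vec{e}_k}$ in the form $Y_{\vec{n}+\vec{e}_i} = R_i(\vec{n})Y_{\vec{n}}$, where (as in the case $r=2$) $Y_{\vec{n}}$ has $P_{\vec{n}}$ in the $(1,1)$ entry and the normalized polynomials $P_{\vec{n}-\vec{e}_k}$ (times the appropriate constants $c_k(\vec{n})$) down the first column, with Cauchy-transform entries filling out the rest; the key point is that $R_i(\vec{n})$ is a matrix polynomial in $z$ of the same shape as $R_1(n,m)$ and $R_2(n,m)$: first row $(z - b_{\vec{n},i}, -a_{\vec{n},1}/c_1(\vec{n}), \ldots)$, first column carrying the constants $c_k(\vec{n}+\vec{e}_i)$, and an identity block elsewhere except that the $(i+1,i+1)$ entry is $0$ while the other diagonal entries below the first are $1$.

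Next I would write down the compatibility identity $R_j(\vec{n}+\vec{e}_i)R_i(\vec{n}) = R_i(\vec{n}+\vec{e}_j)R_j(\vec{n})$, which holds because both sides equal $Y_{\vec{n}+\vec{e}_i+\vec{e}_j}Y_{\vec{n}}^{-1}$ (using $\det Y \equiv 1$, which follows exactly as before). The bulk of the proof is then the purely algebraic task of expanding both sides and matching entries. The $(1,1)$ entry produces a polynomial of degree one in $z$: its leading coefficient gives \eqref{eq:r1} and its constant term gives \eqref{eq:r2} — one has to recognize the combination $\sum_k a_{\vec{n}+\vec{e}_j,k} - \sum_k a_{\vec{n}+\vec{e}_i,k}$ and the $2\times 2$ determinant exactly as in \eqref{eq:3.2}. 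The $(1,i+1)$ entry (degree zero in $z$) gives \eqref{eq:r3} after substituting the analogue of \eqref{d-c1}, namely the relation $c_i(\vec{n}+\vec{e}_j)/c_i(\vec{n}+\vec{e}_i+\vec{e}_j) = b_{\vec{n},j}-b_{\vec{n},i}$ up to sign, which itself comes from comparing the $(i{+}1,1)$ entry of the recurrence with the difference of two nearest-neighbor relations (exactly as $d_{n,m}-c_{n,m}$ was handled in Theorem \ref{thm:2.2}). The off-diagonal first-column and first-row entries that vanish reproduce these auxiliary $c_k$-ratio identities and are what one uses as input.

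A point requiring a little care is the role of the $b$'s: in the $r=2$ notation $c_{n,m}=b_{\vec{n},1}$ and $d_{n,m}=b_{\vec{n},2}$, so \eqref{eq:3.1} is literally \eqref{eq:r1} for $(i,j)=(1,2)$, and one should check that the bookkeeping of which diagonal slot is $0$ versus $1$ in $R_i$ versus $R_j$ is consistent when $i,j$ are arbitrary rather than $1,2$; the identity \eqref{eq:r1} being symmetric in $i\leftrightarrow j$ is a useful sanity check here. I expect the main obstacle to be notational rather than conceptual: keeping the indices straight when multiplying two $(r+1)\times(r+1)$ matrices whose only nonzero structure is in the first row, first column, and a sparse diagonal, and verifying that all entries outside the $(1,1)$, $(1,i{+}1)$, $(1,j{+}1)$, $(i{+}1,1)$, $(j{+}1,1)$ positions are automatically zero on both sides. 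Once the sparsity pattern is pinned down, the surviving equations are in one-to-one correspondence with \eqref{eq:r1}--\eqref{eq:r3} together with the $c_k$-ratio relations, and the computation closes.
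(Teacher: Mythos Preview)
Your approach is a direct generalization of the $r=2$ Riemann--Hilbert argument and would work, but it differs from the paper's proof in its technical setup. The paper deliberately avoids the full Riemann--Hilbert matrix for general $r$: instead of an $(r+1)\times(r+1)$ matrix $Y_{\vec n}$ with Cauchy-transform entries and normalizing constants $c_k(\vec n)$, it takes $Y_{\vec n}$ to be the \emph{column vector} $(P_{\vec n}, P_{\vec n-\vec e_1}, \ldots, P_{\vec n-\vec e_r})^t$ of unnormalized polynomials. The transfer matrix $R_k(\vec n)$ then has first row $(x-b_{\vec n,k}, -a_{\vec n,1},\ldots,-a_{\vec n,r})$, first column all $1$'s below the top entry, and diagonal entries $B_{k,j}(\vec n)=b_{\vec n-\vec e_j,j}-b_{\vec n-\vec e_j,k}$ rather than your $0/1$ pattern; the $c_k$-ratio identities you plan to derive separately are thus \emph{built into} the matrix from the start via the three-neighbor relation $P_{\vec n+\vec e_k}-P_{\vec n+\vec e_j}=(b_{\vec n,j}-b_{\vec n,k})P_{\vec n}$. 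The compatibility $R_j(\vec n+\vec e_i)R_i(\vec n)=R_i(\vec n+\vec e_j)R_j(\vec n)$ is justified not by $\det Y=1$ but by the linear independence of $P_{\vec n},P_{\vec n-\vec e_1},\ldots,P_{\vec n-\vec e_r}$, which the paper proves directly from normality. The payoff of the paper's route is that it uses nothing beyond the recurrence relations themselves---no H\"older weights, no Cauchy transforms---so it applies to all measures with normal multi-indices, whereas your RHP formulation implicitly restricts to the class for which the matrix $Y_{\vec n}$ is defined. The compatibility idea and the entry-matching to obtain \eqref{eq:r1}--\eqref{eq:r3} are otherwise the same.
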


\begin{proof}
The recurrence relations (\ref{eq:1.10}) give a remarkable relation between three neighboring polynomials:
\[  P_{\vec{n}+\vec{e}_k}(x) - P_{\vec{n}+\vec{e}_j}(x) = (b_{\vec{n},j}-b_{\vec{n},k}) P_{\vec{n}}(x). \]
This relation and the $k$-th relation in (\ref{eq:1.10}) can be written as
\[  \begin{pmatrix}
    P_{\vec{n}+\vec{e}_k}(x) \\
    P_{\vec{n}+\vec{e}_k-\vec{e}_1}(x) \\
    P_{\vec{n}+\vec{e}_k-\vec{e}_2}(x) \\
    \vdots \\
    P_{\vec{n}}(x) \\
    \vdots \\
    P_{\vec{n}+\vec{e}_k-\vec{e}_r}(x)
    \end{pmatrix}
  = \begin{pmatrix}
    x-b_{\vec{n},k} & -a_{\vec{n},1} & -a_{\vec{n},2} & \cdots & -a_{\vec{n},k} & \cdots & -a_{\vec{n},r} \\
      1 & B_{k,1}(\vec{n}) & 0 & 0 & 0 & 0 & 0 \\
      1 &  0 & B_{k,2}(\vec{n})  & 0 & 0 & 0 & 0 \\
      \vdots & \\
      1 & 0 & 0 & 0 & 0 & 0 & 0  \\
      \vdots & \\
      1 & 0 & 0 & 0 & 0 & 0 & B_{k,r}(\vec{n})
      \end{pmatrix}
      \begin{pmatrix}
    P_{\vec{n}}(x) \\
    P_{\vec{n}-\vec{e}_1}(x) \\
    P_{\vec{n}-\vec{e}_2}(x) \\
    \vdots \\
    P_{\vec{n}-\vec{e}_k}(x) \\
    \vdots \\
    P_{\vec{n}-\vec{e}_r}(x)
    \end{pmatrix}.   \]
where $B_{k,j}(\vec{n})=b_{\vec{n}-\vec{e}_j,j}-b_{\vec{n}-\vec{e}_j,k}$ (note that $B_{k,k}(\vec{n}) = 0$). We will write this matrix relation as
\[   Y_{\vec{n}+\vec{e}_k} = R_k(\vec{n}) Y_{\vec{n}}. \]
Note that $Y_{\vec{n}}$ is now a vector containing the multiple orthogonal polynomial $P_{\vec{n}}$ and its neighbors from below, and not
a matrix as in the Riemann-Hilbert problem.
Now we observe that there are two ways to obtain $Y_{\vec{n}+\vec{e}_i+\vec{e}_j}$ when $i \neq j$. The first way is 
to first compute $Y_{\vec{n}+\vec{e}_i}$ from $Y_{\vec{n}}$ and then to compute $Y_{\vec{n}+\vec{e}_i+\vec{e}_j}$ from
$Y_{\vec{n}+\vec{e}_i}$:
\[     Y_{\vec{n}+\vec{e}_i+\vec{e}_j} = R_j(\vec{n}+\vec{e}_i) R_i(\vec{n}) Y_{\vec{n}}.  \]
The second way is to first compute $Y_{\vec{n}+\vec{e}_j}$ from $Y_{\vec{n}}$ and then to compute $Y_{\vec{n}+\vec{e}_i+\vec{e}_j}$ from
$Y_{\vec{n}+\vec{e}_j}$:
\[   Y_{\vec{n}+\vec{e}_i+\vec{e}_j} = R_i(\vec{n}+\vec{e}_j) R_j(\vec{n}) Y_{\vec{n}}. \]
The compatibility between these two ways implies that
\[   R_j(\vec{n}+\vec{e}_i) R_i(\vec{n}) Y_{\vec{n}} = R_i(\vec{n}+\vec{e}_j) R_j(\vec{n}) Y_{\vec{n}}. \]
The polynomials in the vector $Y_{\vec{n}}$ are linearly independent whenever $\vec{n}$ is a normal index. Indeed, if $(c_0,c_1,\ldots,c_r)$
are such that
\[  c_0 P_{\vec{n}} + \sum_{j=1}^r c_j P_{\vec{n}-\vec{e}_j} = 0, \]
then surely $c_0=0$ follows by comparing the leading coefficients. If we multiply by $x^{n_k-1}$ and integrate with respect to $\mu_k$, then
\[      c_k \int x^{n_k-1} P_{\vec{n}-\vec{e}_k}(x)\, d\mu_k(x) = 0  \]
so that $c_k=0$ follows, since the integral does not vanish. Indeed, if the integral vanishes, then $P_{\vec{n}} - a P_{\vec{n}-\vec{e}_k}$
is (for every $a$) a monic polynomial of degree $|\vec{n}|$ satisfying the orthogonality relations (\ref{eq:1.9}), which contradicts
the normality of $\vec{n}$. The linear independence of the polynomials in $Y_{\vec{n}}$ implies that
\begin{equation}  \label{RR}
   R_j(\vec{n}+\vec{e}_i) R_i(\vec{n}) = R_i(\vec{n}+\vec{e}_j) R_j(\vec{n}). 
\end{equation}
The transfer matrix $R_k(\vec{n})$ can be written as
\[    R_k(\vec{n}) = \begin{pmatrix}
               x-b_{\vec{n},k} & -\vec{a}_{\vec{n}}^{\,t} \\
               \vec{1} & B_k(\vec{n})
                \end{pmatrix}  \]
where $\vec{a}_{\vec{n}}$ and $\vec{1}$ are (column) vectors of length $r$ containing the $a_{\vec{n},j}$ ($1 \leq j \leq r$) and $r$ ones respectively,
and $B_k(\vec{n})$ is a diagonal matrix of size $r$ containing the $B_{k,j}(\vec{n})$ ($1 \leq j \leq r$) on the diagonal.
We then have
\begin{multline*}    R_i(\vec{n}+\vec{e}_j) R_j(\vec{n}) \\
    = \begin{pmatrix}
              (x-b_{\vec{n}+\vec{e}_j,i})(x-b_{\vec{n},j}) - \sum_{k=1}^r a_{\vec{n}+\vec{e}_j,k} &
                -(x-b_{\vec{n}+\vec{e}_j,i})\vec{a}_{\vec{n}}^{\,t} - \vec{a}_{\vec{n}+\vec{e}_j}^{\,t} B_j(\vec{n}) \\
      (x-b_{\vec{n},j}) \vec{1} + B_i(\vec{n}+\vec{e}_j)\vec{1} & -\vec{1}\cdot \vec{a}_{\vec{n}}^{\,t} + B_i(\vec{n}+\vec{e}_j) B_j(\vec{n})
         \end{pmatrix}  
\end{multline*}
and
\begin{multline*}    R_j(\vec{n}+\vec{e}_i) R_i(\vec{n}) \\
    = \begin{pmatrix}
              (x-b_{\vec{n}+\vec{e}_i,j})(x-b_{\vec{n},i}) - \sum_{k=1}^r a_{\vec{n}+\vec{e}_i,k} &
                -(x-b_{\vec{n}+\vec{e}_i,j})\vec{a}_{\vec{n}}^{\,t} - \vec{a}_{\vec{n}+\vec{e}_i}^{\,t} B_i(\vec{n}) \\
      (x-b_{\vec{n},i}) \vec{1} + B_j(\vec{n}+\vec{e}_i)\vec{1} & -\vec{1}\cdot \vec{a}_{\vec{n}}^{\,t} + B_j(\vec{n}+\vec{e}_i) B_i(\vec{n})
         \end{pmatrix} . 
\end{multline*}
If we compare the $(1,1)$-entries in (\ref{RR}) then this gives a polynomial of degree 1 which is identically zero.
The coefficient of $x$ vanishes whenever (\ref{eq:r1}) holds and the constant term vanishes whenever (\ref{eq:r2}) holds.
If we check the remaining entries on the first row of (\ref{RR}), then we have for $1 \leq k \leq r$
\begin{equation}  \label{aB}
   b_{\vec{n}+\vec{e}_i,j} a_{\vec{n},k} - B_{i,k}(\vec{n}) a_{\vec{n}+\vec{e}_i,k} = 
    b_{\vec{n}+\vec{e}_j,j} a_{\vec{n},k} - B_{j,k}(\vec{n}) a_{\vec{n}+\vec{e}_j,k} . 
\end{equation}
If we take this for $k=i$ then we find
\[   \frac{a_{\vec{n},i}}{a_{\vec{n}+\vec{e}_j,i}} = 
   \frac{b_{\vec{n}-\vec{e}_i,j}-b_{\vec{n}-\vec{e}_i,i}}{b_{\vec{n}+\vec{e}_i,j}-b_{\vec{n}+\vec{e}_j,i}}, \]
which gives (\ref{eq:r3}) after using (\ref{eq:r1}) to simplify the denominator on the right hand side.
By taking $k=j$ in (\ref{aB}) we find (\ref{eq:r3}) again but with $i$ and $j$ interchanged.
\end{proof}

The partial difference relations can be written as
\begin{eqnarray*}  
    \Delta_i b_{\vec{n},j} &=& \Delta_j b_{\vec{n},i},  \\
    \Delta_j \sum_{k=1}^r a_{\vec{n},k} - \Delta_i \sum_{k=1}^r a_{\vec{n},k} &=& \det \begin{pmatrix}
                                           \Delta_j b_{\vec{n},i} & b_{\vec{n},i} \\
					   \Delta_i b_{\vec{n},j} & b_{\vec{n},j}  \end{pmatrix}, \\
    \Delta_j \log a_{\vec{n},i} &=& \nabla_i \log \left( b_{\vec{n},j} - b_{\vec{n},i} \right), 
\end{eqnarray*}

\section{The Christoffel-Darboux formula}
Daems and Kuijlaars \cite{DaeKuij} have given the following Christoffel-Darboux formula for multiple orthogonal polynomials

\begin{theorem}
Suppose $(\vec{n}_i)_{i=0,1,\ldots,|\vec{n}|}$ is a path in $\mathbb{N}^r$ such that $\vec{n}_0 = \vec{0}$, $\vec{n}_{|\vec{n}|} = \vec{n}$
and for every $i \in \{0,1,\ldots,|\vec{n}|-1\}$ one has $\vec{n}_{i+1} - \vec{n}_i = \vec{e}_k$ for some $k \in \{1,2,\ldots,r\}$. Then
\begin{equation}  \label{CD}
 (x-y) \sum_{i=0}^{|\vec{n}|-1} P_{\vec{n}_i}(x) Q_{\vec{n}_{i+1}}(y)
    = P_{\vec{n}}(x) Q_{\vec{n}}(y) - \sum_{j=1}^r a_{\vec{n},j} P_{\vec{n}-\vec{e}_j}(x)Q_{\vec{n}+\vec{e}_j}(y).  
\end{equation}
\end{theorem}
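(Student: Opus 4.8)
The plan is to obtain \eqref{CD} by telescoping a single elementary identity along the path. The \emph{key identity} I would prove first is: for every $\vec{n}\in\mathbb{N}^r$ and every $k\in\{1,\dots,r\}$,
\begin{multline*}
 (x-y)\,P_{\vec{n}}(x)\,Q_{\vec{n}+\vec{e}_k}(y)
   = P_{\vec{n}+\vec{e}_k}(x)\,Q_{\vec{n}+\vec{e}_k}(y) - P_{\vec{n}}(x)\,Q_{\vec{n}}(y) \\
   - \sum_{j=1}^r a_{\vec{n}+\vec{e}_k,j}\,P_{\vec{n}+\vec{e}_k-\vec{e}_j}(x)\,Q_{\vec{n}+\vec{e}_k+\vec{e}_j}(y)
   + \sum_{j=1}^r a_{\vec{n},j}\,P_{\vec{n}-\vec{e}_j}(x)\,Q_{\vec{n}+\vec{e}_j}(y).
\end{multline*}
Given this, write $\vec{n}_{i+1}=\vec{n}_i+\vec{e}_{k_i}$ and add the key identity over $i=0,\dots,|\vec{n}|-1$ (with $\vec{n}=\vec{n}_i$ and $k=k_i$): the first two terms on the right telescope to $P_{\vec{n}}(x)Q_{\vec{n}}(y)-P_{\vec{0}}(x)Q_{\vec{0}}(y)$, and the two $a$-sums telescope to $\sum_j a_{\vec{0},j}P_{-\vec{e}_j}(x)Q_{\vec{e}_j}(y)-\sum_j a_{\vec{n},j}P_{\vec{n}-\vec{e}_j}(x)Q_{\vec{n}+\vec{e}_j}(y)$. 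Since $Q_{\vec{0}}=0$ (the type I polynomials for $\vec{n}=\vec{0}$ have degree $\leq -1$) and $a_{\vec{n},j}=0$ whenever $n_j=0$, the contributions at $\vec{0}$ vanish and exactly \eqref{CD} is left; equivalently this is an induction on $|\vec{n}|$ with a trivial base case, and path-independence is automatic because the right-hand side of \eqref{CD} is.

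To prove the key identity I would write $(x-y)P_{\vec{n}}(x)Q_{\vec{n}+\vec{e}_k}(y)=\big(xP_{\vec{n}}(x)\big)Q_{\vec{n}+\vec{e}_k}(y)-P_{\vec{n}}(x)\big(yQ_{\vec{n}+\vec{e}_k}(y)\big)$, substitute the $k$-th relation of \eqref{eq:1.10} for $xP_{\vec{n}}(x)$, and substitute the $k$-th relation of \eqref{eq:1.10Q} evaluated at the index $\vec{n}+\vec{e}_k$ for $yQ_{\vec{n}+\vec{e}_k}(y)$, so that $Q_{\vec{n}}$ appears (since $(\vec{n}+\vec{e}_k)-\vec{e}_k=\vec{n}$). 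Both substitutions produce the same coefficient $b_{\vec{n},k}$ multiplying $P_{\vec{n}}(x)$ and $Q_{\vec{n}+\vec{e}_k}(y)$ respectively, so those two contributions cancel; collecting what remains and comparing with the claimed right-hand side, the key identity reduces to
\[
 \sum_{j=1}^r a_{\vec{n},j}\,P_{\vec{n}-\vec{e}_j}(x)\big(Q_{\vec{n}+\vec{e}_k}(y)-Q_{\vec{n}+\vec{e}_j}(y)\big)
 = \sum_{j=1}^r a_{\vec{n}+\vec{e}_k,j}\,Q_{\vec{n}+\vec{e}_k+\vec{e}_j}(y)\big(P_{\vec{n}}(x)-P_{\vec{n}+\vec{e}_k-\vec{e}_j}(x)\big).
\]

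The last step is where the structure of the recurrence coefficients enters. On the left I would use the type I analogue of the ``remarkable relation'' $P_{\vec{n}+\vec{e}_k}-P_{\vec{n}+\vec{e}_j}=(b_{\vec{n},j}-b_{\vec{n},k})P_{\vec{n}}$ from the proof of Theorem~\ref{thm:diffr} --- obtained by subtracting two of the relations in \eqref{eq:1.10Q} --- in the form $Q_{\vec{n}+\vec{e}_k}(y)-Q_{\vec{n}+\vec{e}_j}(y)=(b_{\vec{n}+\vec{e}_j,k}-b_{\vec{n}+\vec{e}_k,j})\,Q_{\vec{n}+\vec{e}_k+\vec{e}_j}(y)$, and on the right the type II relation with $\vec{n}$ replaced by $\vec{n}-\vec{e}_j$, namely $P_{\vec{n}}(x)-P_{\vec{n}+\vec{e}_k-\vec{e}_j}(x)=(b_{\vec{n}-\vec{e}_j,k}-b_{\vec{n}-\vec{e}_j,j})\,P_{\vec{n}-\vec{e}_j}(x)$. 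Both sides then become a sum over $j$ of a scalar times $P_{\vec{n}-\vec{e}_j}(x)Q_{\vec{n}+\vec{e}_k+\vec{e}_j}(y)$, so it suffices to check, for each $j$, that
\[
 a_{\vec{n},j}\big(b_{\vec{n}+\vec{e}_j,k}-b_{\vec{n}+\vec{e}_k,j}\big)=a_{\vec{n}+\vec{e}_k,j}\big(b_{\vec{n}-\vec{e}_j,k}-b_{\vec{n}-\vec{e}_j,j}\big);
\]
by \eqref{eq:r1} the left factor equals $a_{\vec{n},j}(b_{\vec{n},k}-b_{\vec{n},j})$, and \eqref{eq:r3}, applied with its two direction indices taken to be $j$ and $k$, turns this into the right-hand side, while the cases $j=k$ and $n_j=0$ are trivial since then both polynomial differences, respectively both $a$-coefficients, vanish. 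I expect the main obstacle to be precisely this index bookkeeping --- keeping the neighbors that appear in the $P$-index separate from those that appear in the $Q$-index, and calling on \eqref{eq:r1} and \eqref{eq:r3} with the correct shifts; once the two ``remarkable relations'' are in place, everything else is the telescoping described above.
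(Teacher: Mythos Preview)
Your proof is correct and is essentially the paper's own argument, just reorganized: where the paper first sums the ``raw'' identity \eqref{PQ} along the path and then rewrites the two $a$-sums into telescoping form using the remarkable relations and (\ref{eq:r1})--(\ref{eq:r3}), you do the rewriting first at a single step (your key identity) and telescope afterwards. The ingredients --- the $k$-th relations in \eqref{eq:1.10} and \eqref{eq:1.10Q}, the neighbor relations $P_{\vec{n}+\vec{e}_k}-P_{\vec{n}+\vec{e}_j}=(b_{\vec{n},j}-b_{\vec{n},k})P_{\vec{n}}$ and its type~I analogue, and the scalar identity coming from (\ref{eq:r1}) and (\ref{eq:r3}) --- are exactly the same.
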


Observe that the right hand side is independent of the path $(\vec{n}_i)_{i=0,1,\ldots,|\vec{n}|}$ in $\mathbb{N}^r$. 
A similar formule for multiple orthogonal polynomials of mixed type is given in \cite{AFM}. 
We will give a simple proof based on the nearest neighbor recurrence relations (\ref{eq:1.11}) and the partial difference equations
in Theorem \ref{thm:diffr}.
\begin{proof}
Use the recurrence relation (\ref{eq:1.10}) for $\vec{n}_i$ and $\vec{n}_i + \vec{e}_k = \vec{n}_{i+1}$ to find
\begin{equation}  \label{Pirec}
   xP_{\vec{n}_i}(x) = P_{\vec{n}_{i+1}}(x) + b_{\vec{n}_i,k} P_{\vec{n}_i}(x) +
    \sum_{j=1}^r a_{\vec{n}_i,j} P_{\vec{n}_i-\vec{e}_j}(x) . 
\end{equation}
In a similar way we can use the recurrence relation (\ref{eq:1.10Q}) for $\vec{n}_{i+1}$ and $\vec{n}_{i+1}-\vec{e}_k = \vec{n}_i$, but with $x$ replaced by $y$, to find
\begin{equation}  \label{Qi1rec}
   yQ_{\vec{n}_{i+1}}(y) = Q_{\vec{n}_i(}y) + b_{\vec{n}_i,k} Q_{\vec{n}_{i+1}}(y) +
    \sum_{j=1}^r a_{\vec{n}_{i+1},j} Q_{\vec{n}_{i+1}+\vec{e}_j}(y) . 
\end{equation}
Multiply (\ref{Pirec}) by $Q_{\vec{n}_{i+1}}(y)$ and (\ref{Qi1rec}) by $P_{\vec{n}_i}(x)$ and subtract both equations, to find
\begin{multline}  \label{PQ}
  (x-y) P_{\vec{n}_i}(x) Q_{\vec{n}_{i+1}}(y) = P_{\vec{n}_{i+1}}(x)Q_{\vec{n}_{i+1}}(y) - P_{\vec{n}_i}(x)Q_{\vec{n}_i}(y) \\
    + \sum_{j=1}^r a_{\vec{n}_i,j} P_{\vec{n}_i-\vec{e}_j}(x)Q_{\vec{n}_{i+1}}(y) 
    - \sum_{j=1}^r a_{\vec{n}_{i+1},j} P_{\vec{n}_i}(x)Q_{\vec{n}_{i+1}+\vec{e}_j}(y).
\end{multline} 
If we sum this for $i=0$ to $|\vec{n}|-1$, then the left hand side corresponds to the left hand side in (\ref{CD}), and the first terms on the right
hand side give
\[   \sum_{i=0}^{|\vec{n}|-1} \bigl( P_{\vec{n}_{i+1}}(x)Q_{\vec{n}_{i+1}}(y) - P_{\vec{n}_i}(x)Q_{\vec{n}_i}(y) \bigr) =
 P_{\vec{n}}(x)Q_{\vec{n}}(y) \]
by using the telescoping property and $Q_{\vec{0}}=0$. The two sums on the right hand side of (\ref{PQ}) need to be rewritten a bit.
Observe that the recurrence relation (\ref{eq:1.10}) implies that
\[    P_{\vec{n}+\vec{e}_k}(x) - P_{\vec{n}+\vec{e}_j}(x) = (b_{\vec{n},j}-b_{\vec{n},k}) P_{\vec{n}}(x).  \]
If we use this for $\vec{n} = \vec{n}_i-\vec{e}_j$, then this gives
\begin{equation}   \label{P1}
   P_{\vec{n}_i}(x) = P_{\vec{n}_{i+1}-\vec{e}_j}(x) + (b_{\vec{n}_i-\vec{e}_j,k}-b_{\vec{n}_i-\vec{e}_j,j}) P_{\vec{n}_i-\vec{e}_j}(x).
\end{equation}
In a similar way, the recurrence relation (\ref{eq:1.10Q}) implies
\[    Q_{\vec{n}-\vec{e}_k}(y) -Q_{\vec{n}-\vec{e}_j}(y) = (b_{\vec{n}-\vec{e}_j,j}-b_{\vec{n}-\vec{e}_k,k}) Q_{\vec{n}}(y). \]
If we use this for $\vec{n} = \vec{n}_{i+1}+\vec{e}_j$, then we find
\begin{equation} \label{Q1}
    Q_{\vec{n}_{i+1}}(y) = Q_{\vec{n}_i+\vec{e}_j}(y) + (b_{\vec{n}_i+\vec{e}_j,k} - b_{\vec{n}_{i+1},j}) Q_{\vec{n}_{i+1}+\vec{e}_j}(y).
\end{equation}
Replace $P_{\vec{n}_i}(x)$ in the second sum of (\ref{PQ}) by (\ref{P1}) and replace $Q_{\vec{n}_{i+1}}(y)$ in the first sum of
(\ref{PQ}) by (\ref{Q1}), then we find that
\begin{eqnarray*}
   \lefteqn{\sum_{j=1}^r a_{\vec{n}_i,j} P_{\vec{n}_i-\vec{e}_j}(x)Q_{\vec{n}_{i+1}}(y) 
    - \sum_{j=1}^r a_{\vec{n}_{i+1},j} P_{\vec{n}_i}(x)Q_{\vec{n}_{i+1}+\vec{e}_j}(y)} & & \\
   &=& \sum_{j=1}^r a_{\vec{n}_i,j} P_{\vec{n}_i-\vec{e}_j}(x)Q_{\vec{n}_i+\vec{e}_j}(y) 
    - \sum_{j=1}^r a_{\vec{n}_{i+1},j} P_{\vec{n}_{i+1}-\vec{e}_j}(x)Q_{\vec{n}_{i+1}+\vec{e}_j}(y) \\
   & &  + \sum_{j=1}^r P_{\vec{n}_i-\vec{e}_j}(x) Q_{\vec{n}_{i+1}+\vec{e}_j}(y) \bigl[
       a_{\vec{n}_i,j} (b_{\vec{n}_i+\vec{e}_j,k} - b_{\vec{n}_{i+1},j}) - a_{\vec{n}_{i+1},j}
      (b_{\vec{n}_i-\vec{e}_j,k}-b_{\vec{n}_i-\vec{e}_j,j}) \bigr].
\end{eqnarray*}
The last sum on the right hand side vanishes because of the partial difference relations in Theorem \ref{thm:diffr}, in particular
(\ref{eq:r3}) and (\ref{eq:r1}) but with $i=k$. The other two sums
are now of the form $S_i-S_{i+1}$, so that we are summing a telescoping series, giving
\[  \sum_{i=0}^{|\vec{n}|-1} (S_i-S_{i+1}) = S_0 - S_{|\vec{n}|} = - \sum_{j=1}^r a_{\vec{n},j} P_{\vec{n}-\vec{e}_j}(x)Q_{\vec{n}+\vec{e}_j}(y), \]
which is the sum on the right hand side of (\ref{CD}).
\end{proof}

\section{Some examples}

In this section we give several examples of multiple orthogonal polynomials and
their nearest neighbor recurrence coefficients. For each example one can verify that the
partial difference equations in Theorem \ref{thm:diffr} indeed hold. Some of these examples
were given before, e.g., in \cite{AptBraWVA}, \cite{ArCoWVA}, \cite{BChDou}, \cite{BleKuij}, \cite{JCWVA}, \cite[Chap.\ 23]{Ismail}, \cite{WVAEC}, but the explicit expression for the recurrence coefficients appears here
for the first time for most of these families. 

\subsection{Multiple Hermite polynomials}
See \cite[\S 3.4]{WVAEC}, \cite[\S 23.5]{Ismail}, \cite{BleKuij}.
These are given by
\[  \int_{-\infty}^\infty x^k H_{\vec{n}}(x) e^{-x^2+c_j x}\, dx = 0, \qquad k=0,1,\ldots, n_j-1, \]
for $1 \leq j \leq r$, where $c_i \neq c_j$ whenever $i \neq j$. The recurrence relation is explicitly given as
\[    x H_{\vec{n}}(x) = H_{\vec{n}+\vec{e}_k}(x) + \frac{c_k}{2}H_{\vec{n}}(x) + \frac12 \sum_{j=1}^r n_j H_{\vec{n}-\vec{e}_j}(x), \]
for $1 \leq k \leq r$, so that
\[    b_{\vec{n},j} = c_j/2, \quad a_{\vec{n},j} = n_j/2, \qquad 1 \leq j \leq r. \]

\subsection{Multiple Charlier polynomials}
See \cite[\S 4.1]{ArCoWVA}, \cite[\S 23.6.1]{Ismail}.
The orthogonality relations are
\[  \sum_{k=0}^\infty C_{\vec{n}}(k) k^\ell \frac{a_j^k}{k!} = 0, \qquad \ell=0,1,\ldots, n_j-1, \]
for $1 \leq j \leq r$, where $a_i >0$ and $a_i \neq a_j$ whenever $i \neq j$. The recurrence relation is given by
\[   xC_{\vec{n}}(x) = C_{\vec{n}+\vec{e}_k}(x) + (a_k + |\vec{n}|) C_{\vec{n}}(x) + \sum_{j=1}^r n_j a_j C_{\vec{n}-\vec{e}_j}(x), \]
so that
\[    b_{\vec{n},j} = |\vec{n}| + a_j, \quad a_{\vec{n},j} = n_ja_j, \qquad 1 \leq j \leq r. \]

\subsection{Multiple Laguerre polynomials of the first kind}  \label{sec:ML1}
See \cite[\S 3.2]{WVAEC} and \cite[\S 23.4.1]{Ismail}, \cite{BleKuij}.
These are given by the orthogonality relations
\[  \int_0^\infty x^k L_{\vec{n}}(x) x^{\alpha_j} e^{-x}\, dx = 0 , \qquad k = 0, 1, \ldots, n_j-1, \]
for $1 \leq j \leq r$, where $\alpha_1, \ldots, \alpha_r > -1$ and $\alpha_i - \alpha_j \notin \mathbb{Z}$. They can be obtained using the Rodrigues formula
\begin{equation}  \label{MLagIRod}
 (-1)^{|\vec{n}|} e^{-x} L_{\vec{n}}(x) 
 = \prod_{j=1}^r \left( x^{-\alpha_j} \frac{d^{n_j}}{dx^{n_j}} x^{n_j+\alpha_j} \right) e^{-x}  
\end{equation}
where the product of the differential operators can be taken in any order. This Rodrigues formula is
useful for computing the recurrence coefficients. Indeed, we easily find
\[    \int_0^\infty x^{n_j} L_{\vec{n}}(x) x^{\alpha_j} e^{-x} =
    (-1)^{|\vec{n}|} \int_0^\infty x^{n_j+\alpha_j-\alpha_1} \frac{d^{n_1}}{dx^{n_1}} x^{n_1+\alpha_1}
      \prod_{i=2}^r  \left( x^{-\alpha_i} \frac{d^{n_i}}{dx^{n_i}} x^{n_i+\alpha_i} \right) e^{-x}\, dx \]
and integration by parts ($n_1$ times) gives
\begin{eqnarray*}
   &=&
    (-1)^{|\vec{n}|+n_1} \int_0^\infty \left( \frac{d^{n_1}}{dx^{n_1}} x^{n_j+\alpha_j-\alpha_1}\right)  
     x^{n_1+\alpha_1}  
     \prod_{i=2}^r  \left( x^{-\alpha_i} \frac{d^{n_i}}{dx^{n_i}} x^{n_i+\alpha_i} \right) e^{-x}\, dx \\
    &=& (-1)^{|\vec{n}|+n_1} \binom{n_j+\alpha_j-\alpha_1}{n_1} n_1! \int_0^\infty x^{n_j+\alpha_j}  
     \prod_{i=2}^r  \left( x^{-\alpha_i} \frac{d^{n_i}}{dx^{n_i}} x^{n_i+\alpha_i} \right) e^{-x}\, dx.
\end{eqnarray*}
Repeating this $r$ times gives
\begin{eqnarray*} 
 \int_0^\infty x^{n_j} L_{\vec{n}}(x) x^{\alpha_j} e^{-x} &=&
    \prod_{i=1}^r \binom{n_j+\alpha_j-\alpha_i}{n_i} n_i! \int_0^\infty x^{n_j+\alpha_j} e^{-x}\, dx \nonumber \\
    &=& \Gamma(n_j+\alpha_j+1) \prod_{i=1}^r \binom{n_j+\alpha_j-\alpha_i}{n_i} n_i!.  
\end{eqnarray*}
If we then use (\ref{eq:1.11}) then we find
\begin{equation}  \label{anjMLagI}
    a_{\vec{n},j} = n_j (n_j+\alpha_j) \prod_{i=1, i\neq j}^r \frac{n_j+\alpha_j-\alpha_i}{n_j-n_i+\alpha_j-\alpha_i},
   \qquad j=1,\ldots,r.
\end{equation}
The recurrence coefficients $b_{\vec{n},k}$ can be obtained by comparing the coefficients of $x^{|\vec{n}|}$ in
the recurrence relation (\ref{eq:1.10}). They are
\begin{equation}  \label{bnkMLagI}
    b_{\vec{n},k} = |\vec{n}| + n_k + \alpha_k + 1, \qquad k=1,\ldots,r.  
\end{equation}
These recurrence coefficients are here given for the first time.
Observe that for $r=1$ one retrieves the well known recurrence coefficients for the monic Laguerre polynomials
$a_n^2 = n(n+\alpha)$ and $b_n = 2n+\alpha+1$.

An interesting observation is that these recurrence coefficients of multiple Laguerre polynomials of the first kind 
satisfy
\begin{equation} \label{sumanjMLagI}
   \sum_{j=1}^r a_{\vec{n},j} = \sum_{j=1}^r n_j \alpha_j 
  + \frac12 \left( |\vec{n}|^2 + \sum_{j=1}^r n_j^2 \right) > 0. 
\end{equation}
Indeed, if we introduce the polynomials
\[   q_r(x) = \prod_{j=1}^r (x-\alpha_j), \qquad Q_{r,\vec{n}}(x) = \prod_{j=1}^r (x-n_j-\alpha_j), \]
then $a_{\vec{n},j} = (n_j+\alpha_j) q_r(n_j+\alpha_j)/Q_{r,\vec{n}}'(n_j+\alpha_j)$ and hence by the residue theorem
\[   \sum_{j=1}^r a_{\vec{n},j} = \frac{1}{2\pi i} \int_\Gamma \frac{zq_r(z)}{Q_{r,\vec{n}}(z)}\, dz \]
where $\Gamma$ is a closed contour around all the zeros $n_i+\alpha_i$ $(i=1,\ldots,r)$ of $Q_{r,\vec{n}}$. We can take
for instance the circle $C_R= \{R e^{i\theta}\ : \ \theta \in [0,2\pi]\}$ for $R$ large enough.
If we change the variable $z = 1/\xi$, then the integral becomes
\[  \frac{1}{2\pi i} \int_{C_r} \frac{q_r^*(\xi)}{\xi^3 Q_{r,\vec{n}}^*(\xi)}\, d\xi  \]
where $C_r$ is the circle with radius $r=1/R$ and the orientation is positive by changing the sign appropriately.
Here $q_r^*(z)=z^r q_r(1/z)$ and $Q_{r,\vec{n}}^*(z) = z^r Q_{r,\vec{n}}(1/z)$ are the polynomials taking in reversed order.
The residue theorem now tells us that the integral is the residue of $q_r^*(\xi)/\xi^3 Q_{r,\vec{n}}^*(\xi)$ at $\xi=0$, and this
can easily be computed by using
\[  \frac{1-\xi \alpha_j}{1-\xi (n_j+\alpha_j)} = 1 + \xi n_j +\xi^2 (n_j+\alpha_j) + \mathcal{O}(\xi^3) , \]
so that 
\[  \frac{q_r^*(\xi)}{Q_{r,\vec{n}}^*(\xi)} = 1 + \xi |\vec{n}| + 
  \xi^2 \left( \sum_{j=1}^r n_j (n_j+\alpha_j) + \sum_{i<j}^r n_in_j \right) + \mathcal{O}(\xi^3), \]
and the residue of $q_r^*(\xi)/\xi^3 Q_{r,\vec{n}}^*(\xi)$ thus corresponds to the coefficient of the quadratic term,
giving (\ref{sumanjMLagI}).

\subsection{Multiple Laguerre polynomials of the second kind}
See \cite[Remark 5 on p.~160]{NikSor}, \cite[\S 3.3]{WVAEC} and \cite[\S 23.4.2]{Ismail}, \cite{BleKuij}.
These are given by the orthogonality relations
\[  \int_0^\infty x^k L_{\vec{n}}(x) x^{\alpha} e^{-c_j x}\, dx = 0 , \qquad k = 0, 1, \ldots, n_j-1, \]
for $1 \leq j \leq r$, where $\alpha > -1$, $c_1, \ldots, c_r > 0$ and $c_i \neq c_j$ whenever $i \neq j$. 
They can be obtained using the Rodrigues formula
\begin{equation}  \label{MLagIIRod}
 (-1)^{|\vec{n}|} \left( \prod_{j=1}^r c_j^{n_j} \right) x^\alpha L_{\vec{n}}(x) 
 = \prod_{j=1}^r \left(  e^{c_jx} \frac{d^{n_j}}{dx^{n_j}} e^{-c_jx}  \right) x^{|\vec{n}|+\alpha}  
\end{equation}
where the differential operators in the product can be taken in any order.
A useful integral is
\[   \int_0^\infty e^{-\lambda x}  \prod_{j=1}^r \left(  e^{c_jx} \frac{d^{n_j}}{dx^{n_j}} e^{-c_jx}  \right) x^{|\vec{n}|+\alpha} \, dx
    = (-1)^{|\vec{n}|} \frac{\Gamma(|\vec{n}|+\alpha+1)}{\lambda^{|\vec{n}|+\alpha+1}} \prod_{j=1}^r (c_j-\lambda)^{n_j} \]
which can be evaluated by using integration by parts in a similar way as in the previous example.
Observe that the right hand side has a zero at $\lambda = c_j$ of multiplicity $n_j$. Using (\ref{MLagIIRod}) we thus have for $\lambda > 0$
\[  \int_0^\infty e^{-\lambda x} x^\alpha L_{\vec{n}}(x)\, dx  =  \frac{\Gamma(|\vec{n}|+\alpha+1)}{\lambda^{|\vec{n}|+\alpha+1}}
  \prod_{i=1}^r ( 1-\lambda/c_i)^{n_i} . \]
Clearly
\[   \left. \frac{d^k}{d\lambda^k} \int_0^\infty e^{-\lambda x} x^\alpha L_{\vec{n}}(x)\, dx\right|_{\lambda=c_j} 
= (-1)^k \int_0^\infty x^k e^{c_jx} x^\alpha L_{\vec{n}}(x)\, dx
   = 0, \qquad 0 \leq k < n_j, \]
which confirms the orthogonality relations, and for $k=n_j$
\[       \int_0^\infty x^{n_j} e^{c_jx} x^\alpha L_{\vec{n}}(x)\, dx = 
    \frac{\Gamma(|\vec{n}|+\alpha+1)}{c_j^{|\vec{n}|+n_j+\alpha+1}} n_j! \prod_{i=1,i\neq j}^r \left( 1 - \frac{c_j}{c_i} \right).  \]
If we then use (\ref{eq:1.11}) then we find
\begin{equation}  \label{anjMLagII}
  a_{\vec{n},j} = \frac{(|\vec{n}|+\alpha)n_j}{c_j^2}, \qquad 1 \leq j \leq r. 
\end{equation}
For the coefficients $b_{\vec{n},k}$ we compare the coefficients of $x^{|\vec{n}|}$ on both sides of the recurrence relation (\ref{eq:1.10}) and
use the explicit expression (23.4.5) in \cite{Ismail} to find
\begin{equation}  \label{bnjMLagII}
     b_{\vec{n},k} = \frac{|\vec{n}|+\alpha+1}{c_k} + \sum_{j=1}^r \frac{n_j}{c_j}.  
\end{equation}
Observe that for $r=1$ and $c_1=1$ we retrieve the recurrence coefficients of monic Laguerre polynomials
$b_n = 2n+\alpha+1$ and $a_n^2 = n(n+\alpha)$.

\subsection{Jacobi-Pi\~neiro polynomials}
See \cite[Remark 7 on p.~162]{NikSor}, \cite[\S 2.1]{WVAEC} and \cite[\S 23.3.2]{Ismail}.
These are multiple orthogonal polynomials on $[0,1]$ for the Jacobi weights $x^\alpha_i (1-x)^\beta$, with $\alpha_1,\ldots,\alpha_r,\beta >-1$
and $\alpha_i-\alpha_j \notin \mathbb{Z}$. They satisfy
\[   \int_0^1 P_{\vec{n}}(x) x^k x^{\alpha_j} (1-x)^\beta \, dx = 0, \qquad k=0,1,\ldots,n_j-1, \ 1 \leq j \leq r. \]
They are given by the Rodrigues formula
\begin{multline}  \label{JPRod}
   (-1)^{|\vec{n}|} \prod_{j=1}^r (|\vec{n}|+\alpha_j+\beta+1)_{n_j} \ (1-x)^\beta P_{\vec{n}}(x) \\
   =  \prod_{j=1}^r \left( x^{-\alpha_j} \frac{d^{n_j}}{dx^{n_j}} x^{n_j + \alpha_j} \right) (1-x)^{|\vec{n}|+\beta},
\end{multline}   
where the product of differential operators is the same as for the multiple Laguerre polynomials of the first kind.
One has
\[   \int_0^1 x^\gamma P_{\vec{n}}(x) (1-x)^\beta \, dx 
= (-1)^{|\vec{n}|} \frac{\prod_{i=1}^r (\alpha_i-\gamma)_{n_i}}{\prod_{i=1}^r (|\vec{n}|+\alpha_i+\beta+1)_{n_i}}
   \frac{\Gamma(\gamma+1)\Gamma(|\vec{n}|+\beta+1)}{\Gamma(|\vec{n}|+\beta+\gamma+2)}
\]
(see \cite[\S 23/3.2]{Ismail} which has an extra $(-1)^{|\vec{n}|}$ that shouldn't be there) so that
\[    \int_0^1 x^{n_j+\alpha_j}  P_{\vec{n}}(x) (1-x)^\beta \, dx 
= \frac{\prod_{i=1}^r \binom{n_j+\alpha_j-\alpha_i}{n_i} n_i!}{\prod_{i=1}^r (|\vec{n}|+\alpha_i+\beta+1)_{n_i}}
   \frac{\Gamma(n_j+\alpha_j+1)\Gamma(|\vec{n}|+\beta+1)}{\Gamma(|\vec{n}|+ n_j+\alpha_j+\beta+2)}.
\]
If we use (\ref{eq:1.11}) we then find for $1 \leq j \leq r$
\begin{multline}   \label{anjJP}
   a_{\vec{n},j} = \prod_{i=1,i\neq j}^r \frac{n_j+\alpha_j-\alpha_i}{n_j-n_i+\alpha_j-\alpha_i}
                   \prod_{i=1}^r \frac{|\vec{n}|+\alpha_i+\beta}{|\vec{n}|+n_i+\alpha_i+\beta} \\
 \times   \frac{n_j(n_j+\alpha_j)(|\vec{n}|+\beta)}{(|\vec{n}|+n_j+\alpha_j+\beta+1)(|\vec{n}|+n_j+\alpha_j+\beta)(|\vec{n}|+n_j+\alpha_j+\beta-1)}.
\end{multline}
We can write this as
\begin{multline*}
    a_{\vec{n},j} =  \frac{q_r(-|\vec{n}|-\beta)}{Q_{r,\vec{n}}(-|\vec{n}|-\beta)} 
     \frac{q_r(n_j+\alpha_j)}{Q'_{r,\vec{n}}(n_j+\alpha_j)} \\
  \times  \frac{(n_j+\alpha_j)(|\vec{n}|+\beta)}{(|\vec{n}|+n_j+\alpha_j+\beta+1)(|\vec{n}|+n_j+\alpha_j+\beta)(|\vec{n}|+n_j+\alpha_j+\beta-1)}, 
\end{multline*}
where the polynomials $q_r$ and $Q_{r,\vec{n}}$ are given, as before, by
\begin{equation}  \label{qQ}
    q_r(x) = \prod_{j=1}^r (x-\alpha_j), \qquad Q_{r,\vec{n}}(x) = \prod_{j=1}^r (x-n_j-\alpha_j).  
\end{equation}
Observe that for $r=1$ we retrieve the recurrence coefficients 
\[ a_n^2 = \frac{n(n+\alpha)(n+\beta)(n+\alpha+\beta)}{(2n+\alpha+\beta-1)(2n+\alpha+\beta)^2(2n+\alpha+\beta+1)}  \]
for monic Jacobi polynomials on $[0,1]$. 

The other recurrence coefficients can be obtained by $b_{\vec{n},k} = \delta_{\vec{n}} - \delta_{\vec{n}+\vec{e}_k}$, where
$\delta_{\vec{n}}$ is the coefficient of $x^{|\vec{n}|-1}$ of the polynomial $P_{\vec{n}}(x)$:
\[   P_{\vec{n}}(x) = x^{|\vec{n}|} + \delta_{\vec{n}} x^{|\vec{n}|-1} + \cdots.  \]
For Jacobi-Pi\~neiro polynomials we have
\begin{equation}  \label{JPdelta}
   \delta_{\vec{n}} = - (|\vec{n}|+\beta) \frac{q_r(-|\vec{n}|-\beta)}{Q_{r,\vec{n}}(-|\vec{n}|-\beta)} + \beta ,
\end{equation}
where the polynomials $q_r$ and $Q_{r,\vec{n}}$ are given in (\ref{qQ}).
To see that this is true, one can use the Rodrigues formula (\ref{JPRod}) to find that
\[    \left( x^{\alpha_i+1} (1-x)^{\beta+1} P_{\vec{n}-\vec{e}_i}^{(\vec{\alpha}+\vec{e}_i,\beta+1)}(x) \right)'
     = -(|\vec{n}|+\alpha_i+\beta+1) x^{\alpha_i} (1-x)^{\beta} P_{\vec{n}}^{(\vec{\alpha},\beta)}(x)  \]
(see also \cite[Eq. (1.4)]{JCWVA}). If we identify the coefficients in this identity, then
\[  (|\vec{n}|+\alpha_i+\beta+1) \delta_{\vec{n}}(\vec{\alpha},\beta) 
   = (|\vec{n}|+\alpha_i+\beta+) \delta_{\vec{n}-\vec{e}_i}(\vec{\alpha}+\vec{e}_i,\beta+1) - |\vec{n}|-\alpha_i. \]
This can be used to prove (\ref{JPdelta}) by induction on $r$. Observe that we can use a decomposition in partial fractions
to write
\[    \frac{q_r(z)}{Q_{r,\vec{n}}(z)} = 1 + \sum_{j=1}^r \frac{q_r(n_j+\alpha_j)/Q'_{r,\vec{n}}(n_j+\alpha_j)}{z-n_j-\alpha_j}, \]
so that
\[  \frac{q_r(-|\vec{n}|-\beta)}{Q_{r,\vec{n}}(-|\vec{n}|-\beta)} 
  = 1 - \sum_{j=1}^r \frac{q_r(n_j+\alpha_j)/Q'_{r,\vec{n}}(n_j+\alpha_j)}{|\vec{n}|+n_j+\alpha_j+\beta}  \]
and
\[   \delta_{\vec{n}} = - |\vec{n}| +  
 (|\vec{n}|+\beta) \sum_{j=1}^r \frac{q_r(n_j+\alpha_j)/Q'_{r,\vec{n}}(n_j+\alpha_j)}{|\vec{n}|+n_j+\alpha_j+\beta}. \]
Furthermore
\[    \sum_{j=1}^r q_r(n_j+\alpha_j)/Q'_{r,\vec{n}}(n_j+\alpha_j) = |\vec{n}| \]
so that a little calculus gives
\begin{equation*}  \label{JPdelta2}
     \delta_{\vec{n}} = - \sum_{j=1}^r \frac{(n_j+\alpha_j)q_r(n_j+\alpha_j)/Q'_{r,\vec{n}}(n_j+\alpha_j)}{|\vec{n}|+n_j+\alpha_j+\beta}. 
\end{equation*}
Note that for $r=1$ this indeed gives
\[   \delta_n = \frac{-n(n+\alpha)}{2n+\alpha+\beta}, \]
from which
\[    b_n = \delta_n - \delta_{n+1} = \frac12 + \frac{\beta^2-\alpha^2}{2(2n+\alpha+\beta)(2n+\alpha+\beta+2)}, \]
which is indeed the recurrence coefficient $b_n$ for the Jacobi polynomials on $[0,1]$. 

We can find a nice closed form for the sum $\sum_{j=1}^r a_{\vec{n},j}$. We have
\begin{multline*} \sum_{j=1}^r a_{\vec{n},j} = (|\vec{n}|+\beta) \frac{q_r(-|\vec{n}|-\beta)}{Q_{r,\vec{n}}(-|\vec{n}|-\beta)} \\
 \times   \sum_{j=1}^r \frac{q_r(n_j+\alpha_j)}{Q_{r,\vec{n}}'(n_j+\alpha_j)} \frac{n_j+\alpha_j}
  {(|\vec{n}|+n_j+\alpha_j+\beta+1)(|\vec{n}|+n_j+\alpha_j+\beta)(|\vec{n}|+n_j+\alpha_j+\beta-1)} ,
\end{multline*}
and for the sum in this expression, we can use the residue theorem to find
\begin{multline*}
  \sum_{j=1}^r \frac{q_r(n_j+\alpha_j)}{Q_{r,\vec{n}}'(n_j+\alpha_j)} \frac{n_j+\alpha_j}
  {(|\vec{n}|+n_j+\alpha_j+\beta+1)(|\vec{n}|+n_j+\alpha_j+\beta)(|\vec{n}|+n_j+\alpha_j+\beta-1)} \\
   = \frac{1}{2\pi i} \int_\Gamma \frac{q_r(z)}{Q_{r,\vec{n}}(z)} \frac{z\, dz}{(z+|\vec{n}|+\beta+1)(z+|\vec{n}|+\beta)(z+|\vec{n}|+\beta-1)},
\end{multline*}
where $\Gamma$ is a closed contour around all the zeros $n_i+\alpha_i$ $(i=1,\ldots,r)$ of $Q_{r,\vec{n}}$ but with
$-|\vec{n}|-\beta$, $-|\vec{n}|-\beta \pm 1$ outside the contour. If we choose $\Gamma$ so that $0$ is inside and 
change the variable $z =1/\xi$, then the integral becomes
\[  \frac{1}{2\pi i} \int_{\Gamma^*} \frac{q_r^*(\xi)}{Q_{r,\vec{n}}^*(\xi)} 
   \frac{d\xi}{(1+\xi(|\vec{n}|+\beta+1))(1+\xi(|\vec{n}|+\beta))(1+\xi(|\vec{n}|+\beta-1))},  \]
where $\Gamma^*$ is now a contour around $0$ with the poles $-1/(|\vec{n}+\beta)$ and $-1/(|\vec{n}|+\beta\pm 1)$ inside and the
zeros of $Q_{r,\vec{n}}^*$ outside. The polynomials $q_r^*$ and $Q_{r,\vec{n}}^*$ are the reversed polynomials (as in \S \ref{sec:ML1}). 
The residue theorem therefore evaluates this contour integral as
\[     \frac12 \frac{q_r(-|\vec{n}|-\beta-1)}{Q_{r,\vec{n}}(-|\vec{n}|-\beta-1)} (|\vec{n}|+\beta-1)
       -  \frac{q_r(-|\vec{n}|-\beta)}{Q_{r,\vec{n}}(-|\vec{n}|-\beta)} (|\vec{n}|+\beta)
       + \frac12 \frac{q_r(-|\vec{n}|-\beta+1)}{Q_{r,\vec{n}}(-|\vec{n}|-\beta+1)} (|\vec{n}|+\beta+1), \]
and we conclude that
\begin{multline*}
  \sum_{j=1}^r a_{\vec{n},j} =  (|\vec{n}|+\beta) \frac{q_r(-|\vec{n}|-\beta)}{Q_{r,\vec{n}}(-|\vec{n}|-\beta)} 
   \left( \frac12 \frac{q_r(-|\vec{n}|-\beta-1)}{Q_{r,\vec{n}}(-|\vec{n}|-\beta-1)} (|\vec{n}|+\beta-1) \right. \\
      \left.  -\  \frac{q_r(-|\vec{n}|-\beta)}{Q_{r,\vec{n}}(-|\vec{n}|-\beta)} (|\vec{n}|+\beta)
       + \frac12 \frac{q_r(-|\vec{n}|-\beta+1)}{Q_{r,\vec{n}}(-|\vec{n}|-\beta+1)} (|\vec{n}|+\beta+1) \right).  
\end{multline*}
Observe that the expression between brackets is $-1/2$ times the second difference $\Delta \nabla$ of $zq_r(z)/Q_{r,\vec{n}}(z)$
evaluated at $z=-|\vec{n}|-\beta$ and that this expression vanishes when $|\vec{n}|=0$.

\begin{verbatim}
Walter Van Assche
Department of Mathematics
Katholieke Universiteit Leuven
Celestijnenlaan 200 B box 2400
BE-3001 Leuven
Belgium
walter@wis.kuleuven.be
\end{verbatim}

\end{document}